\setlist[enumerate]{leftmargin=1.5em}
\setlist[itemize]{leftmargin=1.5em}
\providecommand{\bysame}{\leavevmode\hbox to3em{\hrulefill}\thinspace}
\providecommand{\MR}{\relax\ifhmode\unskip\space\fi MR }
\providecommand{\MRhref}[2]{%
	\href{http://www.ams.org/mathscinet-getitem?mr=#1}{#2}
}
\providecommand{\href}[2]{#2}
\definecolor{purple}{rgb}{0.5, 0, 1}
\definecolor{orange}{rgb}{1,.5,0}
\providecommand{\bysame}{\leavevmode\hbox to3em{\hrulefill}\thinspace}
\providecommand{\MR}{\relax\ifhmode\unskip\space\fi MR }
\providecommand{\MRhref}[2]{%
	\href{http://www.ams.org/mathscinet-getitem?mr=#1}{#2}
}
\providecommand{\href}[2]{#2}
\definecolor{green}{rgb}{0,0.8,0} 
\newtheorem{maintheorem}{Theorem}
\newtheorem{theorem}{Theorem}[section]
\newtheorem{lemma}[theorem]{Lemma}
\theoremstyle{definition}
\theoremstyle{remark}
\newtheorem{remark}[theorem]{Remark}
\numberwithin{equation}{section}
\newcommand{\nrm}[1]{\Vert#1\Vert}
\newcommand{\brk}[1]{\langle#1\rangle}
\newcommand{\nnrm}[1]{{\vert\kern-0.25ex\vert\kern-0.25ex\vert #1 
		\vert\kern-0.25ex\vert\kern-0.25ex\vert}}
\newcommand{\supp}{{\mathrm{supp}}\,}
\newcommand{\lap}{\Delta}
\newcommand{\ud}{\mathrm{d}}
\newcommand{\rd}{\partial}
\newcommand{\nb}{\nabla}
\newcommand{\ohp}{\overline{\bbR^2_+}}
\newcommand{\alp}{\alpha}
\newcommand{\bt}{\beta}
\newcommand{\gmm}{\gamma}
\newcommand{\dlt}{\delta}
\newcommand{\tht}{\theta}
\newcommand{\bbR}{\mathbb R}
\newcommand{\varep}{\varepsilon}
\begin{document}
	
	\title{Wellposedness of inviscid SQG in the half-plane}

	\date{\today}
	
	\renewcommand{\thefootnote}{\fnsymbol{footnote}}
	\footnotetext{\emph{Key words: wellposedness, half-plane, surface quasi-geostrophic equation, fluid dynamics}  \\
		\emph{2020 AMS Mathematics Subject Classification:} 76B47, 35Q35 }

	\renewcommand{\thefootnote}{\arabic{footnote}}


	\author{In-Jee Jeong}
	\address{Department of Mathematical Sciences and RIM, Seoul National University, 1 Gwanak-ro, Gwanak-gu, Seoul 08826, South Korea.}
	\email{injee$\_$j@snu.ac.kr}

	\author{Junha Kim}
	\address{Department of Mathematics, Ajou University.}
	\email{junha02@ajou.ac.kr}

	\author{Hideyuki Miura}
	\address{Department of Mathematics, Tokyo Institute of Technology.}
	\email{miura.h.aj@m.titech.ac.jp}

	\maketitle
	
	
	\begin{abstract}
		We consider the SQG equation without dissipation on the half-plane with Dirichlet boundary condition, and prove local wellposedness in the spaces $W^{3,p}$ and $C^{2,\beta}$ for any $1<p<\infty$ and $0<\beta<1$. We complement this wellposedness by showing that for generic $C^{\infty}_{0}$ initial data, the unique corresponding  solution does not belong to $W^{3,\infty}$.  
	\end{abstract}

	
	\section{Introduction}

	\subsection{Inviscid SQG in the half-plane}
	
	We consider the Cauchy problem for the inviscid surface quasi-geostrophic (SQG) equation in the upper half-plane $\bbR^2_+ = \{ (x_1,x_2) \in \bbR^2 ; x_{2} > 0 \}$:
	\begin{equation}  \label{eq:SQG} \tag{SQG} \rd_t\tht + u\cdot \nb \tht = 0,   
	\end{equation} where the velocity is given by  \begin{equation}\label{eq:Biot-Savart-SQG} \tag{BS}
		\begin{split}
			u(t,x) = P.V. \int_{ \bbR^2_+ } \left[ \frac{(x-y)^\perp}{|x-y|^{3}} - \frac{(x-\bar{y})^\perp}{|x-\bar{y}|^{3}}   \right] \tht(t,y) \, \ud y,
		\end{split}
	\end{equation} with $\bar{y}:=(y_1,-y_2)$ for $y=(y_1,y_2)$. That is, up to a multiplicative constant that we neglect, $u = R^{\perp}\bar{\tht}$, where $R^{\perp} = (-R_{2}, R_{1})$ with $R_{i} = \rd_{i}(-\lap)^{-1/2}$ and $\bar{\tht}$ is the odd extension of $\tht$ to $\bbR^{2}$. As we shall explain below, to study strong solutions for \eqref{eq:SQG} in the half-plane, it is essential to impose the Dirichlet boundary condition \begin{equation}\label{eq:DBC} \tag{DBC} 
		\begin{split}
			\tht(t, x_1,0) \equiv 0, 
		\end{split}
	\end{equation} which will be assumed throughout the paper. Since the velocity satisfies the slip boundary conditions, the condition \eqref{eq:DBC} propagates in time. \\
	
	While dissipative versions of \eqref{eq:SQG} have been studied quite extensively in domains with boundary (\cite{CI1,CI2,CNgu,CNgu2,StoVa,Ign,Iwa2,Iwa1}), culminating in the  global wellposedness result with critical dissipation (\cite{CIN}), for the inviscid SQG, the only wellposedness for strong solutions that we are aware of is the one in Constantin--Nguyen \cite{CNgu} which gives local wellposedness in $H^{1}_{0} \cap W^{2,p}$ for any $2<p<\infty$. While the proof in \cite{CNgu} is given for any planar bounded domain with smooth boundary, it can be easily adapted to the  half-plane case. 
	 Given this $H^{1}_{0} \cap W^{2,p}$ wellposedness result, it is natural to ask whether higher Sobolev or H\"older regularity of the initial data can propagate in time for this $H^{1}_{0} \cap W^{2,p}$ solution. More precisely, we investigate what are the smallest Sobolev and H\"older spaces in which \eqref{eq:SQG} is locally wellposed with \eqref{eq:DBC}. Furthermore, we study  the \textit{maximal regularity} of the unique local-in-time strong solution, for a generic initial datum which is compactly supported and $C^{\infty}$ up to the boundary.

	\subsection{Main Results}\label{sec_results}
	
	Our first result gives local wellposedness of data belonging to either $C^{2,\bt}$ ($0<\bt<1$) or $W^{3,p}$ ($1<p<\infty$), under the Dirichlet boundary condition. Both of these spaces embed into $W^{2,q}$ with some $q>2$, and therefore it is a natural extension of the inviscid wellposedness result of \cite{CNgu}. 
	
	Before stating the result, let us introduce a simplifying notation: if $X$ is a Banach space of functions on $\ohp$ which embeds in $C(\ohp)$, we write $X_{0}$ to be the subspace of $X$ which are compactly supported and vanishes on the boundary of the upper half-plane. (We are using the notation $\ohp$ to emphasize that the regularity holds uniformly up to the boundary.)
	 
	\begin{maintheorem}\label{thm:loc} We have local wellposedness for \eqref{eq:SQG} on $\bbR^{2}_{+}$ with \eqref{eq:DBC} in $C^{2,\bt}_{0}(\ohp)$ for any $0<\bt<1$ and also in $W^{3,p}_{0}(\ohp)$ for any $1 < p < \infty$. Namely, for any initial data $\tht_{0} \in C^{2,\bt}_{0}(\ohp)$ ($W^{3,p}_{0}(\ohp)$, resp.), there exist $\dlt>0$ and a unique corresponding solution satisfying $\tht \in L^{\infty}([0,\dlt]; C^{2,\bt}_{0}(\ohp) )$ (${C}([0,\dlt]; W^{3,p}_{0}(\ohp) )$, resp.). 
	\end{maintheorem}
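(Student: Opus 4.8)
The plan is to use the half-plane Biot--Savart structure directly. Since \eqref{eq:Biot-Savart-SQG} gives $u = R^{\perp}\br{\tht}$ for the odd extension $\br{\tht}$ and $u$ is tangent to $\{x_2=0\}$, the line $\{x_2=0\}$ is invariant and \eqref{eq:DBC} propagates; in effect $\br\tht$ solves the whole-plane \eqref{eq:SQG} with odd data. I would construct the solution by Picard iteration: set $\tht^{0}=\tht_0$ and let $\tht^{n+1}$ solve the linear transport equation $\rd_t\tht^{n+1}+u^{n}\cdot\nb\tht^{n+1}=0$ with $u^{n}=R^{\perp}\br{\tht^{n}}$. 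Each $u^{n}$ is divergence-free, tangent to the boundary, and Lipschitz (its gradient, a half-plane singular integral of $\nb\br{\tht^{n}}$, lies in $L^{\infty}$), so its flow is bi-Lipschitz and preserves both compact support and the vanishing of $\tht^{n+1}$ on $\{x_2=0\}$. The whole difficulty is then a \emph{uniform} a priori bound for $\|\tht^{n}\|$ in $C^{2,\bt}_{0}(\ohp)$ (resp.\ $W^{3,p}_{0}(\ohp)$) on an interval $[0,\dlt]$ depending only on the initial norm.

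To obtain this bound I would differentiate the transport equation $k$ times ($k=2$ for $C^{2,\bt}$, $k=3$ for $W^{3,p}$) and apply the flow-map estimate for transport by a divergence-free, boundary-tangent Lipschitz field, which bounds $\tfrac{d}{dt}\|\nb^{k}\tht\|_{C^{\bt}(\ohp)}$ (resp.\ $\|\nb^{k}\tht\|_{L^{p}}$) by $\|\nb u\|_{L^\infty}\|\nb^{k}\tht\|$ plus the norm of the commutator forcing. The forcing is a sum of products $\nb^{j}u\,\nb^{k+1-j}\tht$; all terms with $j\le k-1$ are controlled by the Calder\'on--Zygmund boundedness of $R^{\perp}$ on $C^{\bt}$ (resp.\ on $L^{p}$, where $p<\infty$ is used) together with the product rule, since they involve at most $k-1$ derivatives of the globally $C^{\bt}$ field $\nb u=R^{\perp}\nb\br\tht$. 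The sole dangerous term is the top one, $\nb^{k}u\cdot\nb\tht$.

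The main obstacle is precisely this term, because $u$ fails to be $C^{2,\bt}$ (resp.\ $W^{3,p}$): the odd extension $\br\tht$ is only $C^{1,\bt}(\bbR^2)$, as $\rd_{22}\br\tht$ jumps across $\{x_2=0\}$ by $2a(x_1)$ with $a(x_1):=\rd_{22}\tht(x_1,0)$, generically nonzero. A direct computation with the kernel in \eqref{eq:Biot-Savart-SQG} shows this jump forces a logarithmic singularity in the \emph{tangential} velocity, $\rd_{22}u_1 = c\,a(x_1)\log x_2$ plus a remainder that is H\"older-$\bt$ up to the boundary (and $\rd_{222}u_1\sim a(x_1)\,x_2^{-1}$ in the $W^{3,p}$ case), so $\nb^{k}u\notin C^{\bt}(\ohp)$ (resp.\ $\notin L^{p}$). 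The resolution is the Dirichlet condition \eqref{eq:DBC}: the singular contributions sit in $u_1$ and are paired in $\nb^{k}u\cdot\nb\tht$ only against tangential derivatives $\rd_1^{j}\tht$, which vanish on $\{x_2=0\}$ since $\tht$ does. The resulting factor $\rd_1\tht=O(x_2)$ upgrades $\log x_2$ to $x_2\log x_2\in C^{\bt}(\ohp)$ (resp.\ turns $x_2^{-1}$ into a bounded, hence $L^{p}$, function). Making this precise via a decomposition of $\nb^{k}u$ into an explicit singular part, estimated directly from the half-plane kernel, plus a $C^{\bt}(\ohp)$ (resp.\ $L^{p}$) remainder, yields $\|\nb^{k}u\cdot\nb\tht\|\aleq P(\|\tht\|)$ and closes the a priori estimate. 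It is exactly the $L^{\infty}$ endpoint that this mechanism cannot reach, consistent with the generic loss of $W^{3,\infty}$ regularity.

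With the uniform bound in hand, I would prove the iteration is contractive in the low norm $L^{2}$: the difference $\eta$ solves a transport equation forced by $(R^{\perp}\br{\eta})\cdot\nb\tht^{n}$, and since $\|R^{\perp}\br\eta\|_{L^2}\aleq\|\eta\|_{L^2}$ and divergence-free, boundary-tangent transport conserves $\|\eta\|_{L^2}$, a Gronwall argument using $\|\nb\tht^{n}\|_{L^\infty}$ gives convergence; the same estimate between two solutions gives uniqueness. Interpolating the $L^{2}$ convergence against the uniform high-norm bound promotes it to the required topology and passes the bound to the limit. Finally, for the $W^{3,p}$ statement I would upgrade $L^{\infty}([0,\dlt];W^{3,p}_0)$ to $C([0,\dlt];W^{3,p}_0)$ by combining weak-in-time continuity with continuity of $t\mapsto\|\tht(t)\|_{W^{3,p}}$ (a Bona--Smith regularization argument, using reversibility of the transport); for $C^{2,\bt}_0$, nonseparability of H\"older spaces yields only the stated $L^{\infty}$-in-time regularity.
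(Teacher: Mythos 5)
Your overall strategy coincides with the paper's: reduce to propagation of higher regularity along the flow, isolate the boundary-induced singular part of $\rd_{22}u_1$ (resp.\ $\rd_{222}u_1$), and close the estimate by observing that this singular part multiplies only $\rd_1\tht$, which vanishes on $\{x_2=0\}$ by \eqref{eq:DBC}. The iteration scheme, the $L^2$ contraction for uniqueness, and the endgame for continuity in time are all consistent with what the paper does.

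However, there is a genuine gap at the heart of the $C^{2,\bt}$ argument. You assert that $\rd_{22}u_1 - c\,a(x_1)\log x_2$ is H\"older-$\bt$ up to the boundary. This is false, and the paper makes a point of it: even after subtracting the natural candidate $4\rd_{22}\tht(x)\log x_2$, the remainder is \emph{not} $C^{\bt}(\ohp)$, because the singular integral sending $\rd_{22}\tht$ to this remainder has a kernel that fails the usual cancellation condition on the sphere. The correct statement (Lemma \ref{lem:vel}, estimate \eqref{eq:u-log}) carries an additional correction term $4(\rd_{22}\tht(x)-\rd_{22}\tht(x'))\log\bigl(2|x-x'|+\sqrt{4|x-x'|^2+x_2^2}\bigr)$, i.e.\ a logarithmic loss in the modulus of continuity; this loss must then be absorbed in the nonlinear estimate, using that the singular factor is always paired with $x_2^{-1}\rd_1\tht$ (bounded via the mean value theorem) together with quantities like $x_2\log x_2$ or a H\"older difference of $\rd_{22}\tht$. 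Without tracking this correction, your a priori bound for the $C^\bt$ seminorm of $\rd_{22}\tht$ does not close as written. A second, smaller imprecision: in the $W^{3,p}$ case, the product $H[\tht]\,\rd_1\tht$ is not pointwise bounded for general $W^{3,p}$ data, since neither $x_2 H[\tht]$ nor $\rd_1\tht/x_2$ is controlled in $L^\infty$; the paper instead proves $\|x_2H[\tht]\|_{L^{2p}}\lesssim\|\rd_{22}\tht\|_{W^{1,p}}$ (Minkowski, Hardy--Littlewood--Sobolev, trace) and pairs it with $\|\rd_1\tht/x_2\|_{L^{2p}}$ via Hardy's inequality, which is what actually lands the term in $L^p$. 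Both points are repairable within your framework, but they are precisely where the real work of the proof lies.
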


\begin{remark}
	A few remarks are in order. \begin{itemize}
		\item As in the $\bbR^{2}$ case,   \begin{equation*}
			\begin{split}
				\int_0^T \nrm{\nb \tht(t,\cdot)}_{L^\infty(\bbR^2_+)} dt < \infty 
			\end{split}
		\end{equation*} controls singularity formation at $t=T$. Furthermore, the uniqueness statement requires just $\tht \in L^{\infty}([0,\dlt];C^{1}(\ohp))$ (the proofs in \cite{Bed,KJ-SQG} for $\bbR^2$ can be easily adapted to the half-plane case). 
		\item  As a direct consequence of this wellposedness result, $C^\infty_0$ initial datum has the corresponding unique solution belonging to to $C^{2,\bt} \cap W^{3,p}$ for any $\bt<1$ and $p<\infty$ during its lifespan.
	\end{itemize}
\end{remark} 
	
The next result complements the first main result by showing that there is \textit{nonexistence} of solutions in $W^{3,\infty}_{0}(\ohp)$ (and hence also in $C^{3}_{0}(\ohp)$), even if the initial data is infinitely differentiable up to the boundary. 
	\begin{maintheorem}\label{thm:nonexist}
		There exist initial data in $\tht_0 \in C^{\infty}_0(\overline{\bbR^2_+})$ such that the corresponding unique solution to \eqref{eq:SQG} given in Theorem \ref{thm:loc} does not belong to $L^\infty([0,\delta];W^{3,\infty}(\overline{\bbR^2_{+}}))$ for any $\delta>0$. A sufficient condition ensuring this is to have a point $x^* \in \partial\bbR^2_+$ such that $\rd_{12}\tht_{0}(x^*)$ and   $\rd_{22}\tht_{0}(x^*)$ are both nonzero. 
	\end{maintheorem}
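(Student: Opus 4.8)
The plan is to show that the only third-order derivative of the solution that can fail to be bounded is $\rd_{222}\tht$, and that under the stated nondegeneracy this derivative is forced to grow logarithmically toward the boundary, instantaneously in time. The whole mechanism is driven by the fact that the map $\tht\mapsto u=R^{\perp}\br\tht$ loses one degree of regularity at $\rd\bbR^2_+$: since $\tht$ vanishes on the boundary, the odd extension $\br\tht$ is only $W^{2,\infty}$ across $\{x_2=0\}$, with $\rd_{22}\br\tht$ jumping by $2\,\rd_{22}\tht(x_1,0)$. I would first isolate this as a velocity structure lemma: for $f$ with the regularity of the solution of Theorem \ref{thm:loc} (so $f\in C^{2,\bt}_{0}$ with the Dirichlet condition),
\[
\rd_{22}\bigl(R^{\perp}\br f\bigr)_1(x_1,x_2) = c_0\,\rd_{22}f(x_1,0)\,\log\tfrac1{x_2} + O(1)\qquad (x_2\to0^{+}),
\]
for a universal constant $c_0\neq0$, while every other second derivative of $u$ stays bounded. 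I would prove this by writing $\rd_{22}\br f = \rd_{22}f(x_1,0)\,\sgn(x_2)\,\chi + (\text{continuous remainder})$ near the boundary point, applying $R_2=\rd_2(-\lap)^{-1/2}$, and computing the model transform $R_2[\sgn(x_2)]\aeq\log(1/\abs{x_2})$ explicitly (on the Fourier side the concentration of $\widehat{\sgn(x_2)}$ on $\{\xi_1=0\}$ produces the borderline $\abs{\xi}^{-1}$); the continuous remainder and the $x_1$-dependence of the coefficient contribute only bounded terms. This single logarithm is precisely the boundary obstruction.

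Next I would feed this into the transport equation. Differentiating \eqref{eq:SQG} and using $\nb\cdot u=0$ together with $\tht(\cdot,0)=0$ (so that $\rd_1\tht$ and $\rd_{11}\tht$ vanish at the boundary, hence are $O(x_2)$), one checks term by term that the commutators $[\rd^{\alpha},u\cdot\nb]\tht$ with $\abs{\alpha}=3$ give bounded forcing for every multi-index except $\alpha=(0,3)$. For $\rd_{222}\tht$ the commutator contains $3(\rd_{22}u_1)(\rd_{12}\tht)$, which by the lemma behaves like $c_0\,\rd_{22}\tht(x_1,0)\,\rd_{12}\tht(x_1,0)\,\log(1/x_2)$ near the boundary, while the competing terms $(\rd_{222}u_1)\rd_1\tht\aeq O(1)$ (the $x_2^{-1}$ singularity is killed by $\rd_1\tht\aeq x_2$) and $(\rd_{222}u_2)\rd_2\tht$ is again logarithmic. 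Arguing by contradiction, I assume $\tht\in L^{\infty}([0,\dlt];W^{3,\infty})$, integrate the equation for $\rd_{222}\tht$ along a characteristic issuing from a point near $x^{*}$ at height $x_2=\veps$, and use that the boundary is invariant and $\abs{u_2}\aleq x_2$ keeps the trajectory at height $\aeq\veps$; the remaining commutator terms stay bounded under the hypothesis and by Gronwall, whereas the logarithmic forcing accumulates to give $\abs{\rd_{222}\tht(t,\cdot)}\ageq t\,\log(1/\veps)$ near the boundary, contradicting boundedness as $\veps\to0$.

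The main obstacle is twofold. The technical heart is the velocity lemma: extracting the exact logarithm and, crucially, controlling the remainder uniformly up to the boundary for a function that is merely $C^{2,\bt}$, so that the displayed coefficient is genuinely nonzero. The more delicate point is the non-cancellation of the total logarithmic coefficient in the forcing for $\rd_{222}\tht$: since the secondary term $\aeq\rd_{122}\tht(x^{*})\,\rd_2\tht(x^{*})\log(1/x_2)$ is also present, for the existence statement I would choose the initial data so that it does not cancel the principal term — for instance taking $x^{*}$ where the boundary trace $x_1\mapsto\rd_{22}\tht_0(x_1,0)$ is critical, or where $\rd_2\tht_0(x^{*})=0$ — so that the surviving coefficient reduces to a nonzero multiple of $\rd_{22}\tht_0(x^{*})\,\rd_{12}\tht_0(x^{*})$. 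This simultaneously produces the explicit data and explains why $\rd_{12}\tht_0(x^{*})\neq0$ and $\rd_{22}\tht_0(x^{*})\neq0$ is the relevant nondegeneracy. Finally, I would verify by continuity of the second-order boundary traces along the flow (available from Theorem \ref{thm:loc}) that this non-cancellation, and hence the sign of the accumulating logarithm, persists on a short time interval.
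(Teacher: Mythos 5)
Your velocity lemma is essentially the paper's Lemma \ref{lem:vel} (estimates \eqref{eq:u-log1}--\eqref{eq:u-log}), and your nondegeneracy condition is the right one, but the transport step has a genuine gap --- one the paper explicitly identifies and is structured to avoid. You propose to write the evolution equation for $\rd_{222}\tht$ and argue that, under the contradiction hypothesis $\tht\in L^\infty([0,\dlt];W^{3,\infty})$, every forcing term except $3\rd_{22}u_1\,\rd_{12}\tht$ is bounded. That is false: the terms involving third derivatives of $u$ (for instance $\rd_{222}u_2\,\rd_2\tht$ and the pieces of $3\,\rd_2^2u\cdot\nb\rd_2\tht$ containing $\rd_{122}u_1$) are Riesz transforms of functions that are merely $L^\infty$ under the hypothesis, and Riesz transforms do not map $L^\infty$ to $L^\infty$. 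This unboundedness has nothing to do with the boundary --- it is exactly the mechanism behind the strong $W^{3,\infty}$ illposedness of SQG on $\bbR^2$ (\cite{CoZo,KJ-SQG}) --- so you cannot split the forcing into a clean $\log(1/x_2)$ boundary term plus an ``$O(1)$'' remainder: the remainder is itself uncontrolled, with no sign or size information, and the Gronwall step does not close. The introduction of the paper names precisely this as ``the enemy'' and as the reason the direct third-derivative equation is unusable.

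The paper's workaround is to never differentiate three times: it studies the finite difference $\rd_{22}\tht(t,\Phi(t,x))-\rd_{22}\tht(t,\Phi(t,y))$ with $y=(x_1,0)$ on the boundary and $|x_1|\le x_2$. The evolution of $\rd_{22}\tht$ involves only $\nb^2 u$, which is controlled by the H\"older and log-Lipschitz velocity estimates with constants depending only on $\nrm{\nb^2\tht}_{\mathrm{Lip}}\le M$; the differences of the regular terms are bounded by $CMx_2$, the log-Lipschitz loss being absorbed by $|\rd_2\tht|\le C(|x_1|+x_2)$ --- which is where the choice of a point with $\rd_2\tht_0(x^*)=0$ and the restriction $|x_1|\le x_2$ enter. (Your instinct about a competing contribution from $\rd_2^2u_2\,\rd_2\tht$ is in the right neighborhood, but the danger is an uncontrolled $CMx_2\log(1/x_2)$ coming through the log-Lipschitz modulus, not a second logarithm with a computable coefficient that could cancel the main one.) The singular terms $\rd_2^2u_1\,\rd_1\tht$ and $2\rd_2u_1\,\rd_{12}\tht$ vanish at the boundary trajectory $\Phi(t,y)$ and contribute a definite-sign forcing $\ge c\,AB\,x_2\log(1/x_2)$ at $\Phi(t,x)$. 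The contradiction hypothesis is invoked only once, at the very end, through $|\rd_{22}\tht(t,\Phi(t,x))-\rd_{22}\tht(t,\Phi(t,y))|\le M|\Phi(t,x)-\Phi(t,y)|\le CMx_2$; dividing by $x_2$ and sending $x_2\to0^+$ yields the contradiction. To repair your argument you would need to replace the pointwise equation for $\rd_{222}\tht$ by this (or an equivalent) difference-quotient formulation in which only second derivatives of the velocity ever appear.
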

	This statement needs to be carefully distinguished with strong illposedness results in $C^{k}$ and $W^{k,\infty}$ for integer $k$ proved in \cite{CoZo,KJ-SQG} for domains without boundary: in that case, $C^\infty$ initial data lead to $C^\infty$ local in time solutions, and $C^{k}, W^{k,\infty}$ illposedness is purely coming from the initial data being exactly $C^{k}$ or $W^{k,\infty}$ and not better. On the other hand, nonexistence in Theorem \ref{thm:nonexist} is for $C^\infty$ initial data and is really coming from the boundary of the domain.

	\begin{remark}\label{rmk:DBC}
		Let us clarify that the Dirichlet boundary condition is essential for local wellposedness of strong solutions, at least when the domain is $\bbR^{2}_{+}$: it was shown in \cite{JKY} that for \textit{any} compactly supported smooth datum $\tht_{0}$ on $\bbR^{2}_{+}$ which does not satisfy \eqref{eq:DBC}, there is \textit{nonexistence} of corresponding $L^\infty([0,\dlt];C^1( \overline{\bbR^2_+} ))$ solution to \eqref{eq:SQG} with any $\dlt>0$. This is an optimal nonexistence result, since one cannot expect wellposedness of strong solutions strictly below $C^{1}$, this space being scaling critical.  
	\end{remark}

	\subsection{Discussion on singularity problem for SQG}
	
	Besides extending the results of \cite{CNgu}, another motivation of the current work is to explore settings in which finite time singularity formation for \eqref{eq:SQG} can be established. Singularity formation for \eqref{eq:SQG} is a notoriously difficult open problem, and there does not seem to be convincing numerical evidence towards blowup for smooth data in domains without boundaries. We refer to \cite{CGI,Cor-hyp,CCGM,CCZ,EJ1,HH-sqg,HMsqg,HuShuZha} for some results, against singularity formation, regarding the SQG equation. The problem is still challenging for the generalized models of the form \begin{equation}  \label{eq:alp-SQG} 
		\left\{
		\begin{aligned} 
			&\rd_t\tht + u\cdot \nb \tht = 0, \\
			&u=  \nb^\perp (-\lap)^{-1 + \frac{\alp}{2} } \tht,
		\end{aligned}
		\right. 
	\end{equation} which are sometimes referred to as the $\alp$-SQG equation. With this notation, $\alp = 1$ corresponds to \eqref{eq:SQG} and $\alp = 0$ to the vorticity formulation for two-dimensional incompressible Euler equations. A larger value of $\alp$ makes the kernel for the velocity more singular. Global regularity of smooth solutions for any $\alp>0$ in $\bbR^{2}$ is a challenging open problem. 
	
	Towards the blowup problem, the half-plane provides a nice stage, the main point being that ``patch type'' data attached to the boundary cannot be detached in finite time, unless a singularity of some form occurs. Essentially, this is the common setup for all the existing blowup results for \eqref{eq:alp-SQG}: \cite{KRYZ,KYZ,GaPa,MTXX} deal with patch solutions attached to the boundary and the very recent work \cite{Z2} concerns solutions in properly weighted H\"older spaces, non-vanishing on the boundary.

	Unfortunately, all of these results work in the range $0<\alp\le1/2$ while the SQG corresponds to $\alp=1$: there is a serious difficulty already in finding a local wellposedness class for non-vanishing data on the boundary when $1/2<\alp\le1$ (\cite{Z2,JKY}). In particular, as we remarked earlier, in the SQG case, there is nonexistence of $C^1$ solutions corresponding to a datum which is not identically zero on the boundary, even if it is $C^{\infty}$ up to the boundary and compactly supported. 
	
	Therefore, the wellposedness class that we present in Theorem \ref{thm:loc} seems to be the ``optimal'' half-plane result for SQG and could be a reasonable space to search for finite time singularity formation, the main difference with the $\bbR^{2}$ case being that the second derivative of the velocity  has a logarithmically divergent component depending on the second derivative of $\tht$ restricted to the boundary. A careful study of the second derivative of $\tht$ on the boundary might provide some scenario for finite time blowup, or at least for small scale creation (roughly speaking, a large growth of some Sobolev or H\"older norms of the solution). We remark that for the SQG equation, results on long time large growth are also rare (\cite{KN,DEJ,HeKi}), since the strongest conservation law $\nrm{\tht}_{L^{\infty}}$ is not able to control $\nrm{u}_{L^{\infty}}$.

	

	\subsection{Ideas and difficulties of the proof}
	
	In this section, we discuss several key points and difficulties involved in the proofs of our main results. 
	
	\medskip 
	
	\noindent \textbf{Proof of wellposedness}. To simplify the discussion, let us assume that $\tht$ is a given on $\bbR^{2}_{+}$ and is infinitely differentiable up to the boundary, and denote its odd extension to $\bbR^{2}$ as $\bar{\tht}$. Since the velocity is defined by $u = R^{\perp}\bar\tht$, if $\tht$ is non-vanishing on the boundary, then we see that $u_{1} = -R_{2}\bar{\tht}$ is not even locally bounded in $\bbR^{2}_{+}$. Even if we impose the Dirichlet boundary condition for $\tht$, $\rd_{22}\tht$ does not need to vanish on the boundary, and therefore $\rd_{22}u_{1} = -R_{2}(\rd_{22}\bar{\tht})$ will have a logarithmic divergent term near the boundary. The velocity does not belong to $W^{2,\infty}$, and hence it is natural to expect that the local wellposedness threshold is $W^{2,\infty}$ (or $C^{2}$) as well. Indeed, $W^{2,p}$ wellposedness for $p<\infty$ is given in \cite{CNgu} and $C^{1,\bt}$ wellposedness on $\bbR^{2}_{+}$ for any $0<\bt<1$ is straightforward, using the odd extension (which again belongs to $C^{1,\bt}$ under \eqref{eq:DBC}) and applying the existing $C^{1,\bt}$ wellposedness in $\bbR^{2}$. Therefore, the main point in Theorem \ref{thm:loc} is that wellposedness holds beyond the $C^{2}$ threshold. This relies on the specific structure of the nonlinearity $R^\perp(\bar{\tht})\cdot\nb\tht$, and roughly speaking is based on the fact that $\rd_{1}$ is paired only with $\rd_{2}$ and not again with $\rd_{1}$ in the nonlinear term. 
	
	For the $C^{2,\bt}$ local wellposedness, we need to investigate H\"older regularity for the second derivatives of the velocity (see Lemma \ref{lem:vel}). Given $\tht \in C^{2,\bt}(\ohp)$, the only component of $\nb^2 u $ which is not $C^{\bt}$ is $\rd_{22}u_{1}$, for the reason that we explained earlier. To begin with, it is straightforward to ``extract'' the logarithmically divergent component from $\rd_{22}u_{1}$, namely $4\log(x_{2})\rd_{22}\tht(x)$, which naturally appears when one tries to commute an $x_{2}$ derivative with the Riesz transform. However, somewhat surprisingly, the difference $\rd_{22}u_{1} - 4\log(x_{2})\rd_{22}\tht(x)$ still does not belong to $C^{\bt}$; the reason is that the singular integral operator defined by $\rd_{22}\tht \mapsto \rd_{22}u_{1} - 4\log(x_{2})\rd_{22}\tht(x)$ has a kernel which does not satisfy the (usual) cancellation condition on the sphere. Still, $\rd_{22}u_{1} - 4\log(x_{2})\rd_{22}\tht(x)$ is ``almost'' $C^{\bt}$, up to a logarithmic loss of the modulus of continuity, and this turns out to be sufficient for wellposedness.
	
	The proof for the $W^{3,p}$ case is more subtle, since we need to estimate third derivatives of the velocity. Commuting derivatives carefully with Riesz transforms in $L^{p}$ shows that the only problematic third order derivative of the velocity is $\rd_{222}u_{1}$, given that $\tht \in W^{3,p}(\bbR^3_+)$. Similarly in the H\"older case, we extract the ``main term'' from $\rd_{222}u_{1}$ in $L^{p}$; it turns out that it is given by \begin{equation*}
		\begin{split}
			\int_{\bbR} \frac{2x_{2}}{|x-(y_1,0)|^{3}} \rd_{22}\tht (y_1,0) \, \ud y_{1}. 
		\end{split}
	\end{equation*} Our main observation, stated in Lemma \ref{lem:vel-W3p}, is that this expression belongs to $L^{2p}(\bbR^2)$ after multiplying by another factor of $x_{2}$, when $\tht \in W^{3,p}$ for $1<p<\infty$. In the nonlinear estimate, we can gain this additional factor of $x_{2}$ from the structure of the nonlinearity: in the estimate for $\rd_{t}\rd_{222}\tht$, the term involving $\rd_{222}u_{1}$ can be written as \begin{equation*}
	\begin{split}
		\rd_{222}u_{1} \rd_{1}\tht = x_{2}\rd_{222}u_{1} \frac{\rd_{1}\tht}{x_{2}} 
	\end{split}
	\end{equation*} where $\rd_{1}\tht/x_{2}$ is also bounded in $L^{2p}$ thanks to \eqref{eq:DBC} and Hardy's inequality. 
	
	\medskip 
	
	\noindent \textbf{Proof of illposedness}. The idea behind $W^{3,\infty}$ illposedness is very simple: since we expect $\rd_{22}u_{1}(x) \sim \log(x_{2})$, the equation for  $\rd_{t}\rd_{222}\tht(x)$ has a logarithmically divergent term as well. However, it turns out to be very difficult to directly use the evolution equation for the third derivative of $\tht$: essentially, the enemy is the $W^{3,\infty}$ illposedness of \eqref{eq:SQG} in the absence of boundaries. More precisely, while we may assume the existence of a local in time $W^{3,\infty}$ solution $\tht$ (for the sake of deriving a contradiction), the corresponding velocity field does not belong to $W^{3,\infty}$ even in the absence of boundaries, due to the failure of $L^{\infty}$ bounds of the Riesz transforms. In other words, for this half-plane illposedness result, one needs to distinguish the unboundedness coming from the physical boundary with that from failure of $L^{\infty}$ bounds for the Riesz transforms. To this end, our approach is to carefully study the evolution equation for the difference quotient \begin{equation*}
		\begin{split}
			\frac{ \rd_{22}\tht(t,x_1,x_2) - \rd_{22}\tht(t,x_1,0) }{x_{2}}, 
		\end{split}
	\end{equation*} and show that it diverges like $\log x_2$.

	\subsection{Outline of the paper}
	
	The remainder of this paper is organized as follows. In Section \ref{sec:key}, we collect several key estimates for the velocity. The proof of wellposedness (Theorem \ref{thm:loc}) is given in Section \ref{sec:lwp}. Lastly, we prove nonexistence result (Theorem \ref{thm:nonexist}) in Section \ref{sec:illposed}.

	\subsection{Notation} In the proofs, the value of $c, C>0$ may change from a line to another, or even within a single line. We denote the dependency of the constant with respect to various parameters using subscripts. Given a function $f$ on $\bbR^2_+$, we denote $\overline{f}$ and $\widetilde{f}$ as its odd and even extensions in the $x_2$-variable, respectively: \begin{equation*}
		\begin{split}
			\overline{f}(\bar{x}) = \overline{f}(x_1,-x_2) = -f(x_1,x_2) \qquad \mbox{and} \qquad \widetilde{f}(\bar{x}) = \widetilde{f}(x_1,-x_2) = f(x_1,x_2)
		\end{split}
	\end{equation*}for $x_{2}>0$, where we are also using the notation $\bar{x} = (x_{1}, -x_{2})$. Lastly, throughout the paper we shall assume that the solution $\tht$ is compactly supported in space, and use $L>0$ for the length scale of the support. We write $B(0;L)$ for the ball centered at the origin with radius $L$.

	\section{Preliminary Estimates}\label{sec:key}
	
	\subsection{H\"older estimates}
	
	\begin{lemma}\label{lem:Holder}
		We have the following estimates for $0<\bt<1$. 
		\begin{enumerate}
			\item If $f \in C^\bt_0(\ohp)$ then $\nrm{\overline{f}}_{C^{\bt}(\bbR^2)} \le C_\bt \nrm{f}_{C^\bt(\ohp)}$. 
			\item If $f \in C^{1,\bt}_0(\ohp)$ then $\nrm{\overline{f}}_{C^{1,\bt}(\bbR^2)} \le C_\bt \nrm{f}_{C^{1,\bt}(\ohp)}$. 
		\end{enumerate}
	\end{lemma}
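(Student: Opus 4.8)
The plan is to reduce both estimates to the situation in which the two evaluation points lie on opposite sides of the boundary $\{x_2=0\}$, since whenever both points lie in the same closed half-plane the Hölder difference is controlled directly by the corresponding seminorm of $f$ (using that odd reflection preserves pointwise absolute values, so in particular the sup-norm contribution to the $C^\bt$ norm is immediate from $\abs{\br f}=\abs f$). Throughout I write a point in the upper half-plane as $x=(x_1,x_2)$ with $x_2>0$, and note that compact support, continuity, and \eqref{eq:DBC} (all encoded in the subscript $0$) guarantee that $\br f$ is a well-defined continuous function on $\bbR^2$.

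For part (1), fix $x$ with $x_2>0$ and $y$ with $y_2<0$; all other configurations are trivial or follow by symmetry. Writing $\br f(y)=-f(y_1,-y_2)$, I would insert the (vanishing) boundary values and estimate
\begin{equation*}
\abs{\br f(x)-\br f(y)} = \abs{f(x_1,x_2)-f(x_1,0) + f(y_1,-y_2)-f(y_1,0)} \le [f]_{C^\bt}\bb(x_2^\bt+\abs{y_2}^\bt\bb).
\end{equation*}
Since $x$ and $y$ lie on opposite sides of the boundary, $\abs{x-y}\ge x_2+\abs{y_2}$, so both $x_2^\bt$ and $\abs{y_2}^\bt$ are bounded by $\abs{x-y}^\bt$, giving $\abs{\br f(x)-\br f(y)}\le 2[f]_{C^\bt}\abs{x-y}^\bt$, as claimed.

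For part (2), the key bookkeeping observation is that differentiation interacts with odd reflection according to $\rd_1\br f=\br{\rd_1 f}$ and $\rd_2\br f=\tld{\rd_2 f}$: the tangential derivative extends oddly while the normal derivative extends evenly. Differentiating \eqref{eq:DBC} in $x_1$ shows $\rd_1 f$ vanishes on the boundary, so $\rd_1 f\in C^\bt_0(\ohp)$ and part (1) applies directly, giving $\nrm{\rd_1\br f}_{C^\bt(\bbR^2)}\le C_\bt\nrm{f}_{C^{1,\bt}(\ohp)}$ together with continuity of $\rd_1\br f$ across $\{x_2=0\}$. For the normal derivative I instead need that the \emph{even} extension of a $C^\bt$ function is $C^\bt$ with no boundary condition required: in the cross-boundary case the distance $\abs{(x_1-y_1,\,x_2-\abs{y_2})}$ arising from evaluating $\tld{\rd_2 f}$ satisfies $\abs{(x_1-y_1,\,x_2-\abs{y_2})}\le\abs{(x_1-y_1,\,x_2+\abs{y_2})}=\abs{x-y}$ because $x_2,\abs{y_2}>0$, so the Hölder difference of $\tld{\rd_2 f}$ is bounded by $[\rd_2 f]_{C^\bt}\abs{x-y}^\bt$. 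Since the even extension is automatically continuous, the full gradient of $\br f$ is continuous across the boundary, and a standard gluing argument upgrades $\br f$ to a genuine $C^1$ function; combined with the two gradient bounds this yields the $C^{1,\bt}$ estimate.

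The same-half-plane bounds and the $C^1$ gluing argument are routine; the only point I expect to require genuine care is the structural observation in part (2) — namely that the tangential derivative extends oddly and hence needs \eqref{eq:DBC} (via its $x_1$-derivative) to preserve $C^\bt$ regularity, whereas the normal derivative extends evenly and needs no boundary condition at all. Getting this dichotomy right, rather than any individual estimate, is what makes the lemma work.
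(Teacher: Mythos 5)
Your proof is correct and follows essentially the same strategy as the paper's: for (1) you insert boundary points where $f$ vanishes (you use the vertical feet $(x_1,0)$, $(y_1,0)$ where the paper uses the intersection of the segment $[x,x']$ with $\partial\bbR^2_+$, a trivial variation), and for (2) you reduce to part (1) for the odd extension $\rd_1\br f=\br{\rd_1 f}$ and to the unconditional $C^\bt$ stability of even extension for $\rd_2\br f=\tld{\rd_2 f}$, exactly as the paper does. No gaps; your version just spells out the cross-boundary distance comparisons in more detail.
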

	\begin{proof}
		To prove (1), we take two points $x, x'$ which are assumed to satisfy $x \in \bbR^2_+$ and $x' \notin \bbR^2_+$. The line segment connecting $x$ to $x'$ meets $\partial\bbR^2_+$ at a unique point $z$. Then we write \begin{equation*}
			\begin{split}
				|\overline{f}(x) - \overline{f}(x')| &\le |\overline{f}(x) - \overline{f}(z)| + |\overline{f}(z) - \overline{f}(x')| \\
				& \le |f(x)-f(z)| + |f(z)-f(\overline{x'})| \le C(|x-z|^\bt + |z-\overline{x'}|^\bt) \le C(|x-x'|^\bt)
			\end{split}
		\end{equation*} where $\overline{x'}$ is the reflection point of $x'$ across the $x_1$-axis, and we have used that $f(z)=-f(z)=0$.

		To prove (2), we note that the odd extension of $\rd_1f$ to $\bbR^2$ satisfies $C^\bt$ estimate thanks to (1). To see it for $\rd_2f$, we note that $\rd_2 \overline{f}$ is the even extension of $\rd_2 f$ across $x_2$. This implies $\rd_2\overline{f} \in C^{\bt}$.

	\end{proof}


	\subsection{Sobolev estimates}
	Given a measurable function $g$ defined on the upper half-plane $\bbR^{2}_{+}$, recall that $\overline{g}$ and $\widetilde{g}$ represent their odd and even extensions to $\bbR^{2}$, respectively. 
	
	\begin{lemma}\label{lem:interchange}
		Let $g \in W^{1,p}(\bbR^{2}_{+})$ for $1 \leq p < \infty$. Then, \begin{equation*}
			\begin{split}
				\rd_{1}\overline{g} = \overline{ \rd_{1}g }, \qquad \rd_{1}\widetilde{g} = \widetilde{ \rd_{1}g }, \qquad \rd_{2} \widetilde{g} = \overline{\rd_{2}g}. 
			\end{split}
		\end{equation*} Here, the LHS is the distributional derivative in $\bbR^{2}$ of the odd (or even) extension, and the RHS is the odd (or even) extension of the distributional derivative in $\bbR^2_{+}$. Next, if we assume further that $g \equiv 0$ on $\partial\bbR^{2}_{+}$, then we have \begin{equation*}
			\begin{split}
				\rd_{2} \overline{g} = \widetilde{\rd_{2}g}. 
			\end{split}
		\end{equation*}
	\end{lemma}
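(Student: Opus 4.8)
The plan is to prove Lemma~\ref{lem:interchange} by reducing the statement about distributional derivatives in $\bbR^2$ to a standard integration-by-parts identity tested against $C^\infty_c(\bbR^2)$ functions. For each of the four formulas I would fix a test function $\varphi \in C^\infty_c(\bbR^2)$ and compute $\int_{\bbR^2} \overline{g}\,\rd_i\varphi\,\ud x$ (or the analogous even-extension integral), split the integral into the upper and lower half-planes, use the reflection symmetry of $\overline{g}$ (resp. $\widetilde{g}$) to fold the lower integral back onto $\bbR^2_+$, and then integrate by parts on the half-plane using the $W^{1,p}(\bbR^2_+)$ regularity of $g$. The boundary terms along $\partial\bbR^2_+$ are the crux of the matter, so the whole proof really comes down to tracking which boundary contributions cancel and which survive.

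\medskip

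First I would handle the three ``unconditional'' identities. For $\rd_1\overline{g} = \overline{\rd_1 g}$, differentiating in the tangential direction $x_1$ never produces a boundary term from the $x_2$-integration, so after folding one simply gets that the odd symmetry of $g$ is inherited by $\rd_1 g$, giving the claim with no hypothesis on the trace. The same reasoning applies verbatim to $\rd_1\widetilde{g} = \widetilde{\rd_1 g}$. For $\rd_2\widetilde{g} = \overline{\rd_2 g}$, the key point is that differentiating an \emph{even} function across $x_2=0$ produces a function that is odd, and when I fold the integral over the lower half-plane the two boundary contributions at $x_2=0$ appear with opposite signs and cancel \emph{regardless} of the value of the trace of $g$; this is exactly why no boundary condition is needed here. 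In each case I would note that $g \in W^{1,p}(\bbR^2_+)$ with $1\le p<\infty$ has a well-defined trace on $\partial\bbR^2_+$ lying in $L^1_{loc}$, which is all that is required for the integration by parts to be legitimate.

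\medskip

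The genuinely different case is the last one, $\rd_2\overline{g} = \widetilde{\rd_2 g}$, where I expect the main obstacle to lie. Here $\overline{g}$ is \emph{odd} across $x_2=0$, and its $x_2$-derivative, computed naively in each half-plane separately, is even; but differentiating an odd function that has a jump in its odd extension would produce a singular boundary distribution (a multiple of $\delta_{\{x_2=0\}}$) proportional to the trace $g|_{\partial\bbR^2_+}$. Concretely, when I fold the test integral $\int \overline{g}\,\rd_2\varphi$ the two boundary terms now add rather than cancel, and they sum to $2\int_{\bbR}(g|_{x_2=0})\,\varphi(x_1,0)\,\ud x_1$. The hypothesis $g\equiv 0$ on $\partial\bbR^2_+$ is precisely what kills this spurious boundary distribution and leaves only the even extension of $\rd_2 g$. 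So the structure of the argument is to carry out the folding exactly as before, isolate this boundary term explicitly, and then invoke the Dirichlet condition to discard it. I would take care to justify that the trace vanishes in the appropriate sense (e.g. the trace operator $W^{1,p}(\bbR^2_+)\to L^p(\partial\bbR^2_+)$ is the one along which $g\equiv 0$), since without a rigorous trace statement the cancellation in the first three parts and the surviving term in the fourth cannot be cleanly separated.
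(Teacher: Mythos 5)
Your proposal is correct, but it takes a genuinely different route from the paper. You fold the reflected integral back onto $\bbR^2_+$ and then apply the Gauss--Green formula for $W^{1,p}$ functions directly, using the trace theorem to make sense of the boundary terms: no boundary term for the tangential derivatives, cancellation of the two boundary contributions for $\rd_2\widetilde{g}=\overline{\rd_2 g}$ (since the folded test function $\varphi(x)-\varphi(\bar x)$ vanishes on $\{x_2=0\}$), and a surviving term $2\int_{\bbR} g(x_1,0)\varphi(x_1,0)\,\ud x_1$ for $\rd_2\overline{g}$ that is killed by the Dirichlet condition. The paper instead avoids the trace theorem altogether: it inserts a cutoff $\sigma(x_2/\varepsilon)$, shows the near-boundary contribution vanishes as $\varepsilon\to 0^+$ by absolute continuity of the integral (and, in the $\rd_2\widetilde{g}$ case, controls the extra term $\varepsilon^{-1}\sigma'(x_2/\varepsilon)(\varphi(x)-\varphi(\bar x))$ by the mean value theorem), and for the last identity simply invokes density of $C^\infty_c(\bbR^2_+)$ in $W^{1,p}_0(\bbR^2_+)$ rather than computing any boundary term. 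Your argument is more transparent about \emph{why} the hypothesis is needed --- it exhibits the obstructing distribution supported on $\{x_2=0\}$ explicitly --- but it leans on the trace theorem and the up-to-the-boundary integration-by-parts formula for $W^{1,p}$ functions, which are themselves usually established by the same kind of density or cutoff argument the paper uses; you should also state precisely that ``$g\equiv 0$ on $\partial\bbR^2_+$'' is meant in the trace sense (equivalently $g\in W^{1,p}_0(\bbR^2_+)$), so that the discarded boundary integral genuinely vanishes. With those two standard facts quoted, your plan closes without gaps.
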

	\begin{remark}\label{rmk:W1p-extension}
		Let $1 \leq p < \infty$. Then, we have that \begin{itemize}
			\item $\widetilde{g}\in W^{1,p}(\bbR^{2})$, if $g \in W^{1,p}(\bbR^{2}_{+})$
			\item  $\overline{g}\in W^{1,p}(\bbR^{2})$, if $g \in W^{1,p}_{0}(\bbR^{2}_{+})$ 
			\item $\rd_{22}\overline{g} = \overline{\rd_{22}g}$ and $\overline{g} \in W^{2,p}(\bbR^2)$, if $g \in W^{2,p}_{0}(\bbR^{2}_{+})$. 
		\end{itemize}
	\end{remark}
	\begin{proof}
		Let $g \in W^{1,p}(\bbR^{2}_{+})$, hence, \begin{equation}\label{eq:weak_der}
			\begin{aligned}
				\int_{\bbR^2_{+}} \partial_j g(x) \varphi(x) \,\ud x = - \int_{\bbR^2_{+}} g(x) \partial_j \varphi(x) \,\ud x, \qquad j = 1,\, 2
			\end{aligned}
		\end{equation} for any $\varphi \in C^{\infty}_c(\bbR^2_{+})$. We show that \begin{equation*}
			\begin{aligned}
				\int_{\bbR^2} \overline{\partial_1 g}(x) \varphi(x) \,\ud x = - \int_{\bbR^2} \overline{g}(x) \partial_1 \varphi(x) \,\ud x
			\end{aligned}
		\end{equation*} holds for all $\varphi \in C^{\infty}_c(\bbR^2)$. We consider an even function $\sigma(\cdot) \in C^{\infty}_c(\bbR)$ such that $\sigma = 1$ on $[-\frac{1}{2},\frac{1}{2}]$ and $\sigma = 0$ on $(-\infty,-1] \cup [1,\infty)$. Then, we can decompose the left-hand side as \begin{equation*}
			\begin{aligned}
				\int_{\bbR^2} \overline{\partial_1 g}(x) \sigma(\frac{x_2}{\varepsilon}) \varphi(x) \,\ud x + \int_{\bbR^2} \overline{\partial_1 g}(x) (1-\sigma(\frac{x_2}{\varepsilon})) \varphi(x) \,\ud x
			\end{aligned}
		\end{equation*} for any $\varepsilon > 0$. Note the first integral converges to $0$ as $\varepsilon \to 0$ because it is bounded by \begin{equation}\label{eq:sigma_1}
			\begin{aligned}
				2\| \varphi \|_{L^{\infty}(\bbR^2)} \int_{K_{\varepsilon}} | \nabla g(x) | \,\ud x \leq 2\| \varphi \|_{L^{\infty}(\bbR^2)} \| \nabla g \|_{L^p(K_{\varepsilon})} |K_{\varepsilon}|^{1-\frac{1}{p}},
			\end{aligned}
		\end{equation} where $K_{\varepsilon} = \{ (x_1,x_2) \in \bbR^2 ; -R < x_1 < R,\, 0 < x_2 < \varepsilon \}$ for sufficiently large $R>0$. On the othr hand, we have \begin{equation*}
			\begin{aligned}
				\int_{\bbR^2} \overline{\partial_1 g}(x) (1-\sigma(\frac{x_2}{\varepsilon})) \varphi(x) \,\ud x = \int_{\bbR^2_{+}} \partial_1 g(x) (1-\sigma(\frac{x_2}{\varepsilon})) \varphi(x) \,\ud x - \int_{\bbR^2_{+}} \partial_1 g(x) (1-\sigma(\frac{x_2}{\varepsilon})) \varphi(\overline{x}) \,\ud x,
			\end{aligned}
		\end{equation*} where $(1-\sigma(\frac{x_2}{\varepsilon})) \varphi(x)$ and $(1-\sigma(\frac{x_2}{\varepsilon})) \varphi(\overline{x})$ belong to $C^{\infty}_c(\bbR^2_{+})$. Applying \eqref{eq:weak_der} to each integral, we have \begin{equation*}
			\begin{aligned}
				\int_{\bbR^2} \overline{\partial_1 g}(x) (1-\sigma(\frac{x_2}{\varepsilon})) \varphi(x) \,\ud x &= -\int_{\bbR^2_{+}} g(x) (1-\sigma(\frac{x_2}{\varepsilon})) \partial_1 \varphi(x) \,\ud x + \int_{\bbR^2_{+}} g(x) (1-\sigma(\frac{x_2}{\varepsilon})) \partial_1 \varphi(\overline{x}) \,\ud x \\
				&= -\int_{\bbR^2} \overline{g}(x) (1-\sigma(\frac{x_2}{\varepsilon})) \partial_1 \varphi(x) \,\ud x.
			\end{aligned}
		\end{equation*} By the dominated convergence theorem, \begin{equation*}
			\begin{aligned}
				\int_{\bbR^2} \overline{g}(x) (1-\sigma(\frac{x_2}{\varepsilon})) \partial_1 \varphi(x) \,\ud x \to \int_{\bbR^2} \overline{g}(x) \partial_1 \varphi(x) \,\ud x
			\end{aligned}
		\end{equation*} as $\varepsilon \to 0$. Thus, $\rd_{1}\overline{g} = \overline{ \rd_{1}g }$ follows from the above restults. One may prove $\rd_{1}\widetilde{g} = \widetilde{ \rd_{1}g }$ similarly. 
		
		Next, we show that $\rd_{2} \widetilde{g} = \overline{\rd_{2}g}$, i.e., \begin{equation*}
			\begin{aligned}
				\int_{\bbR^2} \overline{\partial_2 g}(x) \varphi(x) \,\ud x = - \int_{\bbR^2} \widetilde{g}(x) \partial_2 \varphi(x) \,\ud x
			\end{aligned}
		\end{equation*} holds for all $\varphi \in C^{\infty}_c(\bbR^2)$. Following the previous procedure, we have \begin{equation*}
			\begin{aligned}
				\int_{\bbR^2} \overline{\partial_2 g}(x) \varphi(x) \,\ud x = \int_{\bbR^2} \overline{\partial_2 g}(x) \sigma(\frac{x_2}{\varepsilon}) \varphi(x) \,\ud x + \int_{\bbR^2} \overline{\partial_2 g}(x) (1-\sigma(\frac{x_2}{\varepsilon})) \varphi(x) \,\ud x,
			\end{aligned}
		\end{equation*}  regardless of the choice of $\varepsilon > 0$, and \begin{equation*}
			\begin{aligned}
				\left| \int_{\bbR^2} \overline{\partial_2 g}(x) \sigma(\frac{x_2}{\varepsilon}) \varphi(x) \,\ud x \right| \to 0
			\end{aligned}
		\end{equation*} as $\varepsilon \to 0$ due to \eqref{eq:sigma_1}. Using \eqref{eq:weak_der}, we have  \begin{equation*}
		\begin{split}
				\int_{\bbR^2} \overline{\partial_2 g}(x) (1-\sigma(\frac{x_2}{\varepsilon})) \varphi(x) \,\ud x &= -\int_{\bbR^2_{+}} g(x) \partial_2 [(1-\sigma(\frac{x_2}{\varepsilon})) \varphi(x)] \,\ud x + \int_{\bbR^2_{+}} g(x) \partial_2 [(1-\sigma(\frac{x_2}{\varepsilon})) \varphi(\overline{x})] \,\ud x \\
			&= - \int_{\bbR^2} \widetilde{g}(x) (1-\sigma(\frac{x_2}{\varepsilon})) \partial_2 \varphi(x) \,\ud x + \frac{1}{\varepsilon}\int_{\bbR^2_{+}} g(x) \sigma'(\frac{x_2}{\varepsilon}) (\varphi(x) - \varphi(\overline{x})) \,\ud x.
		\end{split}
		\end{equation*} Note from the mean value theorem that \begin{equation}\label{eq:sigma_2}
			\begin{aligned}
				\left| \frac{1}{\varepsilon}\int_{\bbR^2_{+}} g(x) \sigma'(\frac{x_2}{\varepsilon}) (\varphi(x) - \varphi(\overline{x})) \,\ud x \right| &\leq C\| \sigma' \|_{L^{\infty}(\bbR)} \int_{K_{\varepsilon}} |g(x)| \frac{|\varphi(x) - \varphi(\overline{x})|}{2x_2} \,\ud x \\
				&\leq C\| \sigma' \|_{L^{\infty}(\bbR)} \| \rd_{2} \varphi \|_{L^{\infty}(\bbR^2)} \| g \|_{L^p(K_{\varepsilon})} |K_{\varepsilon}|^{1-\frac{1}{p}}.
			\end{aligned}
		\end{equation} Consequently, we have with the dominated convergence theorem that \begin{equation*}
			\begin{aligned}
				\int_{\bbR^2} \overline{\partial_2 g}(x) (1-\sigma(\frac{x_2}{\varepsilon})) \varphi(x) \,\ud x \to -\int_{\bbR^2} \widetilde{g}(x) \partial_2 \varphi(x) \,\ud x
			\end{aligned}
		\end{equation*} as $\varepsilon \to 0$. Thus, $\rd_{2} \widetilde{g} = \overline{\rd_{2}g}$ holds.
		
		Now, we consider $g \in W^{1,p}_{0}(\bbR^{2}_{+})$. Since $C^{\infty}_c(\bbR^2_{+})$ is dense in $W^{1,p}_{0}(\bbR^2_{+})$ and $h \in C^{\infty}_c(\bbR^2_{+})$ satisfies \begin{equation*}
			\begin{aligned}
				\int_{\bbR^2} \widetilde{\partial_2 h}(x) \varphi(x) \,\ud x = - \int_{\bbR^2} \overline{h}(x) \partial_2 \varphi(x) \,\ud x
			\end{aligned}
		\end{equation*} for all $\varphi \in C^{\infty}_c(\bbR^2)$, it follows $\partial_2 \overline{g} = \widetilde{\partial_2 g}$.
		This completes the proof.
	\end{proof}

	\subsection{Key estimates on the velocity }
	
	In this subsection, we collect several key estimates on the velocity. We present three lemmas, which correspond to H\"older, log-Lipschitz, and Sobolev estimates for the velocity. 
	

	
	\begin{lemma}[H\"older estimates]\label{lem:vel}
		Fix some $0<\bt<1$ and let $\tht \in C^{2,\bt}_{0}(\ohp)$ be supported in $B(0;L)$. Then, the velocity $u =- \nb^\perp (-\lap)^{-\frac12}\tht$ satisfies \begin{equation}\label{eq:u-C2}
			\begin{split}
				\nrm{u_2}_{C^{2,\bt}(\ohp)} + \nrm{\rd_1 u_1}_{C^{1,\bt}(\ohp)} \le  C_{L}\nrm{\tht}_{C^{2,\bt}(\ohp)},
			\end{split}
		\end{equation}  \begin{equation}\label{eq:u-log1}
			\begin{split}
				\nrm{\rd_2^2 u_1(x) - 4\rd_2^2 \tht(x) \log (x_2) }_{L^{\infty}} \le  C_{L}\nrm{\tht}_{C^{2,\bt}(\ohp)},
			\end{split}
		\end{equation}  and \begin{multline}\label{eq:u-log}
			\bigg|(\rd_{2}^2u_{1} (x) - 4\rd_{2}^2 \tht(x) \log (x_2)) - (\rd_{2}^2u_1(x') - 4 \rd_{2}^2\tht(x') \log (x_2')) \\
			+ 4(\rd_{2}^2 \tht(x) - \rd_{2}^2 \tht(x')) \log (2|x-x'| + \sqrt{4|x-x'|^2 + x_2^2}) \bigg| \le C_{L}\nrm{\tht}_{C^{2,\bt}(\ohp)} |x-x'|^{\beta}
		\end{multline} for all $x$, $x' \in \bbR^2_{+}$ with $|x_1|$, $|x_1'| < 3L$. 
	\end{lemma}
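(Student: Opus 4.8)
The plan is to represent $\rd_2^2u_1$ as an explicit singular integral over $\bbR^2_+$, peel off the boundary--induced logarithm by an elementary computation, and then estimate the increment of the remainder; the bounds \eqref{eq:u-C2} for the ``good'' components $u_2=R_1\br\tht$ and $\rd_1u_1=-R_1R_2\br\tht$ follow from classical Riesz/Calder\'on--Zygmund $C^\bt$ estimates together with Lemma \ref{lem:Holder}, since a $\rd_1$ never pairs with a second $\rd_2$, so I will concentrate on \eqref{eq:u-log}. Set $g:=\rd_2^2\tht\in C^\bt_0(\ohp)$, so $\nrm{g}_{C^\bt}\le\nrm{\tht}_{C^{2,\bt}}$. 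Since $\tht$ obeys \eqref{eq:DBC}, two applications of Lemma \ref{lem:interchange} give $\rd_2\br\tht=\tld{\rd_2\tht}$ and then $\rd_2^2\br\tht=\br{\rd_2^2\tht}=\br g$; as $u_1=-R_2\br\tht$, this yields $\rd_2^2u_1=-R_2\br g$. Folding the whole--plane Riesz transform of the odd extension back to the half--plane produces
\begin{equation*}
\rd_2^2u_1(x)=\mathrm{p.v.}\int_{\bbR^2_+}K(x,y)\,g(y)\,\ud y,\qquad K(x,y)=-\frac{x_2-y_2}{|x-y|^3}+\frac{x_2+y_2}{|x-\br y|^3},
\end{equation*}
a sum of the interior (Riesz) kernel, singular at $y=x$, and an image kernel that is smooth in the interior but concentrates near $(x_1,0)$ as $x_2\to0$.

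Next I would extract the diagonal logarithm by writing $g(y)=g(x)+[g(y)-g(x)]$. The constant part gives $g(x)\,\mathrm{p.v.}\!\int_{\bbR^2_+}K(x,y)\,\ud y$, a purely geometric quantity; evaluating the inner $y_1$--integrals (which give $\pm 2/(x_2\pm y_2)$) and the elementary $y_2$--integrals shows that \emph{both} pieces of $K$ contribute a $\log x_2$, so that on the strip $|x_1|<3L$ one has $g(x)\,\mathrm{p.v.}\!\int K\,\ud y=4\log(x_2)\,g(x)+(C^\bt\text{ in }x)$. Since $|g(y)-g(x)|\le\nrm{g}_{C^\bt}|x-y|^\bt$ makes $R(x):=\mathrm{p.v.}\!\int_{\bbR^2_+}K(x,y)[g(y)-g(x)]\,\ud y$ absolutely convergent and bounded by $C_L\nrm{g}_{C^\bt}$, this already yields \eqref{eq:u-log1}; moreover
\begin{equation*}
\rd_2^2u_1(x)-4\log(x_2)\,g(x)=R(x)+\big(C^\bt\text{ remainder}\big),
\end{equation*}
and \eqref{eq:u-log} reduces to controlling the increment $R(x)-R(x')$.

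To estimate $R(x)-R(x')$, fix $x,x'$ with $|x_1|,|x_1'|<3L$, set $h:=|x-x'|$, and assume without loss of generality $x_2\le x_2'$ (swapping changes the correction term only by $O(h^\bt)$). I would split into interior and image contributions and, within each, into the near region $\{|y-x|\le 2h\}$ and its complement. For the interior Riesz kernel, the subtraction of $g(x)$ (resp.\ $g(x')$) and the vanishing angular average of the Riesz kernel on circles about $x$ make this a standard commutator--type estimate, bounded by $C_L\nrm{g}_{C^\bt}h^\bt$. The image kernel is the genuinely new feature: $\tfrac{x_2+y_2}{|x-\br y|^3}$ does \emph{not} have vanishing angular average as one approaches the boundary, and in the regime $x_2\lesssim h$ its near--region increment carries a logarithmically large factor. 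Carrying out the tangential integration reduces the essential term to $\int_0^{ch}\ud t/\sqrt{t^2+x_2^2}=\log\!\big(ch+\sqrt{c^2h^2+x_2^2}\big)-\log x_2$; pairing the resulting logarithm with the local oscillation $g(x)-g(x')$ and recombining with the subtracted main terms $4\log(x_2)g(x)$ and $4\log(x_2')g(x')$ produces exactly the stated correction $4(g(x)-g(x'))\log\!\big(2|x-x'|+\sqrt{4|x-x'|^2+x_2^2}\big)$, with everything else of size $O(h^\bt)$.

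The main obstacle is the interaction of the two small scales $x_2$ (distance to the boundary) and $h=|x-x'|$, together with the requirement to isolate the \emph{precise} logarithmic correction rather than a crude $|\log|$ bound. I would organize the argument by the dichotomy $h\le x_2/2$ versus $h>x_2/2$. In the interior regime $h\le x_2/2$ the segment $[x,x']$ stays at height $\aeq x_2$, one has $\log(2h+\sqrt{4h^2+x_2^2})\aeq\log x_2$, and the increment is controlled by Calder\'on--Zygmund $C^\bt$ estimates for $K$ at the scale $x_2$ (the subtraction of $4\log(x_2)g$ being exactly what keeps the constants uniform as $x_2\to0$). The boundary regime $h>x_2/2$ is where the image kernel's lack of cancellation forces the correction; here the delicate point is to evaluate the near--diagonal image contribution sharply enough to pin down both the argument $2|x-x'|+\sqrt{4|x-x'|^2+x_2^2}$ and the coefficient $4$, while verifying that the interior Riesz part contributes no additional logarithm near the boundary and that all non--logarithmic remainders are genuinely $O(h^\bt)$. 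This exact extraction, rather than any single estimate, is the technical heart of the lemma.
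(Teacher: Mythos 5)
Your overall strategy is viable but organized quite differently from the paper's. The paper does not work with the $2$D kernel directly: it first integrates by parts in $y_2$ to write $\rd_2 u_1(x)=-\int_{\bbR^2_+}\bigl[\tfrac{1}{|x-y|}-\tfrac{1}{|x-\br{y}|}\bigr]\rd_2^2\tht(y)\,\ud y$ (the boundary terms cancel), then splits the density as $\rd_2^2\tht(y)=\rd_2^2\tht(y_1,0)+[\rd_2^2\tht(y)-\rd_2^2\tht(y_1,0)]$; the bracketed part has a $C^\bt$ odd extension and contributes a $C^{1,\bt}$ term, while the trace part collapses, after one more $\rd_2$, to the one-dimensional boundary integral $-2\int_\bbR (\,(x_1-y_1)^2+x_2^2)^{-1/2}\rd_2^2\tht(y_1,0)\,\ud y_1$. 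All of \eqref{eq:u-log1} and \eqref{eq:u-log} then follow from elementary $1$D calculus on this integral (the decomposition $J_1+J_2+J_3$), and the exact argument $2|x-x'|+\sqrt{4|x-x'|^2+x_2^2}$ is literally the antiderivative $\log(s+\sqrt{s^2+x_2^2})$ evaluated at the cutoff $s=2|x-x'|$. Your route — splitting the kernel into interior Riesz plus image parts and the density into $g(x)+[g(y)-g(x)]$ — can be made to work, and your extraction of $4\log x_2$ for \eqref{eq:u-log1} is correct (including the observation that \emph{both} kernel pieces contribute $2\log x_2$ each), but it leaves you with a genuinely $2$D increment estimate where the paper has only a $1$D one.

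There is, however, one concrete error in your plan for \eqref{eq:u-log} that would derail the verification as you describe it. You assert that the correction term comes only from the image kernel, and that part of the task is ``verifying that the interior Riesz part contributes no additional logarithm near the boundary.'' This is false, and it contradicts your own (correct) observation for \eqref{eq:u-log1}. The interior Riesz kernel has vanishing angular average only on circles about $x$ of radius at most $x_2$; once the radius exceeds $x_2$, the truncation to $\bbR^2_+$ removes an arc on which $x_2-y_2>0$, and the integral of $\tfrac{x_2-y_2}{|x-y|^3}$ over the retained portion of the annulus $\{2|x-x'|<|y-x|<R\}$ is of size $\log\bigl(R/\max(|x-x'|,x_2)\bigr)$, exactly like the image part. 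Concretely, in the term $[g(x)-g(x')]\int_{\mathrm{far}}K(x',y)\,\ud y$ that produces the correction, the interior and image kernels each contribute a coefficient $2$, and the stated coefficient $4$ in \eqref{eq:u-log} is their sum (this is transparent in the paper's $1$D reduction, where $\rd_{x_2}\int_0^\delta(a^2+(x_2\mp y_2)^2)^{-1/2}\,\ud y_2$ yields $\mp(a^2+x_2^2)^{-1/2}$ from each piece). If you run your scheme expecting only the image kernel to generate the logarithm, you will either find an unexplained residual $\log$ from the interior part or land on a correction term with coefficient $2$ instead of $4$. The fix is simply to treat the two kernel pieces symmetrically in the far-region computation; with that change, and with the minor bookkeeping of restricting the ``constant part'' integral $g(x)\int K\,\ud y$ to a bounded region containing the support (the integral over all of $\bbR^2_+$ is only conditionally defined), your argument goes through.
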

	\begin{remark}\label{rmk:gm}
		Note that \eqref{eq:u-C2} and \eqref{eq:u-log1} imply $\| \partial_2 u_1 \|_{C^{\gamma}} \leq C_{L}\nrm{\tht}_{C^{2,\bt}(\ohp)}$ for any $0 < \gamma < 1$.
	\end{remark}
	\begin{remark}
		If $x \in \bbR^2_{+}$ satisfies $|x_1|> 2L$, then $\| \partial_2 u_1 \|_{W^{k,\infty}(\bbR^2_{+})} \leq C_{L,k} \| \rd_{2}^2 \tht \|_{L^{\infty}}$ for all $k \geq 0$. If $x$, $x' \in \bbR^2_{+}$ satisfy $|x_1| < 2L < 3L < |x_1'|$, then one have $|x_1 - x_1'| \geq L$, and  \begin{equation*}
			\begin{split}
				|\rd_{2}^2u_{1} (x) - 4\rd_{2}^2 \tht(x) \log (x_2) - (\rd_{2}^2u_1(x') - 4 \rd_{2}^2\tht(x') \log (x_2')) | \leq C_{L}\nrm{\tht}_{C^{2,\bt}(\ohp)} |x-x'|.
			\end{split}
		\end{equation*}
	\end{remark}
	
	\begin{proof} We only provide a proof for $u_1$ since one can adjust it to $u_2$ and complete the proof without difficulty. We recall the formula \begin{equation*}
			\begin{split}
				u_{1} (x) = P.V. \int_{\bbR_+^2} \left[ \frac{x_2-y_2}{|x-y|^3} -  \frac{x_2 + y_2 }{|x-\overline{y}|^3} \right] \tht(y) \,\ud y = P.V. \int_{\bbR^2} \frac{x_2-y_2}{|x-y|^3} \overline{\tht(y)} \,\ud y.
			\end{split}
		\end{equation*} The odd extension of $\tht$ satisfies $\overline{\tht} \in C^1_c(\mathbb{R}^2)$, $\partial_1 \overline{\tht} \in C^{1,\beta}$, and $\partial_2 \overline{\tht} \in \operatorname{Lip}(\mathbb{R}^2)$. Note that $\partial_2^2 \overline{\tht}$ could be discontinuous at the boundary when $\lim_{y_2 \to 0^+} \partial_2^2\theta(y) \neq 0$ for some $y_1 \in \bbR$. Thus, we have $\partial_1 u_1 \in C^{1,\beta}(\mathbb{R}^2)$ since for any $f \in C^{1,\bt}_c(\mathbb{R}^2)$ \begin{equation*}
			\begin{split}
				\partial_i \left( P.V. \int_{\bbR^2} \frac{(x-y)^{\perp}}{|x-y|^3} f(y) \,\ud y \right) = P.V. \int_{\bbR^2} \frac{(x-y)^{\perp}}{|x-y|^3} \partial_i f(y) \,\ud y \in C^{\bt}(\mathbb{R}^2)
			\end{split}
		\end{equation*} holds. To estimate $\partial_2^2 u_1$, we note \begin{equation*}
			\begin{split}
				\partial_2 u_1(x) = P.V. \int_{\bbR^2} \frac{x_2-y_2}{|x-y|^3} \partial_2 \overline{\tht(y)} \,\ud y \in C^{\gamma}(\mathbb{R}^2), \qquad \gamma < 1.
			\end{split}
		\end{equation*} We split the integral as \begin{equation*}
			\begin{gathered}
				P.V. \int_{\bbR^2_{+}} \frac{x_2-y_2}{|x-y|^3} \partial_2 \tht(y) \,\ud y + P.V. \int_{\bbR^2_{+}} \frac{x_2+y_2}{|x-\overline{y}|^3} \partial_2 \tht(y) \,\ud y .
			\end{gathered}
		\end{equation*} Using integration by parts with \begin{equation*}
			\begin{split}
				\rd_{y_2} \left( \frac{1}{|x-y|} \right) = \frac{x_2-y_2}{|x-y|^3}, \qquad \rd_{y_2} \left( \frac{1}{|x-\overline{y}|} \right) = -\frac{x_2+y_2}{|x-\overline{y}|^3},
			\end{split}
		\end{equation*} we arrive at \begin{equation*}
			\begin{split}
				- P.V. \int_{\bbR^2_{+}} \frac{1}{|x-y|} \partial_2^2 \tht(y) \,\ud y + P.V. \int_{\bbR^2_{+}} \frac{1}{|x-\overline{y}|} \partial_2^2 \tht(y) \,\ud y = - \int_{\bbR^2_{+}} \left[ \frac{1}{|x-y|} - \frac{1}{|x-\overline{y}|} \right] \partial_2^2 \tht(y) \,\ud y,
			\end{split}
		\end{equation*} where the boundary terms cancel each other out. We first split $\partial_2^2 \tht(y)$ as $\partial_2^2 \tht(y_1,0) + (\partial_2^2 \tht(y)- \partial_2^2 \tht(y_1,0))$, and note that the corresponding integral for $\partial_2^2 \tht(y)- \partial_2^2 \tht(y_1,0)$ belongs to $C^{1,\bt}(\bbR^2)$ simply because the odd extension of $\partial_2^2\tht(y)-\partial_2^2 \tht(y_1,0)$ belongs to $C^\bt(\bbR^2)$. After using \begin{equation*}
			\begin{split}
				\partial_\tau \int_{0}^{\delta} f(\tau-\eta) \,\ud \eta = f(\tau) - f(\tau - \delta), \qquad f \in C(\bbR), \qquad \tau,\, \delta \in \bbR,
			\end{split}
		\end{equation*} it only remains to consider the part  \begin{equation}\label{eq:log_term}
			\begin{split}
				&- \partial_2 \int_{\bbR} \left[ \int_{y_2 \ge 0} \frac{1}{|x-y|} \,\ud y_2 \right] \partial_2^2 \tht(y_1,0) \,\ud y_1 + \partial_2 \int_{\bbR} \left[ \int_{y_2 \ge 0} \frac{1}{|x-\overline{y}|} \,\ud y_2 \right] \partial_2^2 \tht(y_1,0) \,\ud y_1 \\
				&\qquad= -2 \int_{\bbR} \frac{1}{ \sqrt{ (x_1-y_1)^2 + x_2^2 }  } \rd_{2}^2 \tht(y_1,0) \, \ud y_1.
			\end{split}
		\end{equation}  We prove \eqref{eq:u-log1} first. If $|x| \le 3L$, the domain of $y_1$ integral can be restricted to $|x_1-y_1| \le 4L$. (Estimating the integral is simpler if $|x|>3L$.) We further decompose the last integral as \begin{equation*}
			\begin{split}
				-2 \int_{|x_1-y_1| \le 4L} \frac{1}{ \sqrt{ (x_1-y_1)^2 + x_2^2 }  } \left(  (\rd_{2}^2\tht(y_1,0) - \rd_{2}^2\tht(x)) + \rd_{2}^2\tht(x) \right) \, \ud y_1 
			\end{split}
		\end{equation*} The contribution to the integral from $(\rd_{2}^2\tht(y_1,0) - \rd_{2}^2\tht(x))$ is uniformly bounded. We note that \begin{equation}\label{eq:log_x2}
			\begin{split}
				\int_{|x_1-y_1| \le 4L } 	\frac{1}{ \sqrt{ (x_1-y_1)^2 + x_2^2 }  } \, \ud y_1 + 2\log(x_2) = 2\log (4L + \sqrt{16L^2+x_2^2}),
			\end{split}
		\end{equation} thus, \eqref{eq:u-log1} follows. To prove \eqref{eq:u-log}, fix $x$ and $x'$ with $|x_1|$, $|x_1'| \leq 3L$. Thanks to \eqref{eq:log_term}, it suffices to show that \begin{gather*}
			\bigg| -2 \int_{|x_1-y_1| \le 10L } \frac{1}{ \sqrt{ (x_1-y_1)^2 + x_2^2 }  } \rd_{2}^2 \tht(y_1,0) \, \ud y_1 +2 \int_{|x_1-y_1| \le 10L } \frac{1}{ \sqrt{ (x_1'-y_1)^2 + x_2'^2 }  } \rd_{2}^2 \tht(y_1,0) \, \ud y_1 \\
			- 4 \rd_{2}^2 \tht(x) \log (x_2) + 4 \rd_{2}^2 \tht(x') \log (x_2') + 4 (\rd_{2}^2 \tht(x) - \rd_{2}^2 \tht(x')) \log (2|x-x'| + \sqrt{4|x-x'|^2 + x_2^2}) \bigg| \\
			\leq C \| \tht \|_{C^{2,\beta}(\ohp)} |x-x'|^{\beta}.
		\end{gather*} For this, we let \begin{gather*}
			-2 \int_{|x_1-y_1| \le 10L } \frac{1}{ \sqrt{ (x_1-y_1)^2 + x_2^2 }  } \rd_{2}^2 \tht(y_1,0) \, \ud y_1 +2 \int_{|x_1-y_1| \le 10L } \frac{1}{ \sqrt{ (x_1'-y_1)^2 + x_2'^2 }  } \rd_{2}^2 \tht(y_1,0) \, \ud y_1 \\
			=J_1+J_2+J_3,
		\end{gather*} where \begin{equation*}
			\begin{split}
				J_1 := &-2 \int_{|x_1-y_1| \le 2|x-x'| } \frac{1}{ \sqrt{ (x_1-y_1)^2 + x_2^2 }  } (\rd_{2}^2 \tht(y_1,0) - \rd_{2}^2 \tht(x)) \, \ud y_1 \\
				&+2 \int_{|x_1-y_1| \le 2|x-x'| } \frac{1}{ \sqrt{ (x_1'-y_1)^2 + x_2'^2 }  } (\rd_{2}^2 \tht(y_1,0) - \rd_{2}^2 \tht(x')) \, \ud y_1,
			\end{split}
		\end{equation*} \begin{equation*}
			\begin{split}
				J_2 := &-2 \int_{2|x-x'| \le |x_1-y_1| \le 10L } \frac{1}{ \sqrt{ (x_1-y_1)^2 + x_2^2 }  } (\rd_{2}^2 \tht(y_1,0) - \rd_{2}^2 \tht(x)) \, \ud y_1 \\
				&+2 \int_{2|x-x'| \le |x_1-y_1| \le 10L } \frac{1}{ \sqrt{ (x_1'-y_1)^2 + x_2'^2 }  } (\rd_{2}^2 \tht(y_1,0) - \rd_{2}^2 \tht(x')) \, \ud y_1,
			\end{split}
		\end{equation*} \begin{equation*}
			J_3 := -2 \int_{|x_1-y_1| \le 10L } \frac{1}{ \sqrt{ (x_1-y_1)^2 + x_2^2 }  } \rd_{2}^2 \tht(x) \, \ud y_1 +2 \int_{|x_1-y_1| \le 10L } \frac{1}{ \sqrt{ (x_1'-y_1)^2 + x_2'^2 }  } \rd_{2}^2 \tht(x') \, \ud y_1,
		\end{equation*} and prove \begin{equation}\label{eq:claim1}
			|J_1| \leq C\| \rd_{2}^2 \tht \|_{C^{\beta}(\ohp)}|x-x'|^{\beta},
		\end{equation} \begin{equation}\label{eq:claim2}
			|J_2 + 4 (\rd_{2}^2 \tht(x) - \rd_{2}^2 \tht(x')) \log (2|x-x'| + \sqrt{4|x-x'|^2 + x_2^2})| \leq C \| \rd_{2}^2 \tht \|_{C^{\beta}(\ohp)} |x-x'|^{\beta},
		\end{equation} \begin{equation}\label{eq:claim3}
			|J_3 - 4 \rd_{2}^2 \tht(x) \log (x_2) + 4 \rd_{2}^2 \tht(x') \log (x_2')| \leq C \| \rd_{2}^2\tht \|_{C^{\beta}(\ohp)} |x-x'|^{\beta}.
		\end{equation} Since \eqref{eq:claim1} is clear, we omit the proof for it. $J_2$ corresponds to \begin{equation*}
			\begin{gathered}
				-2 \int_{2|x-x'| \leq |x_1-y_1| \leq 10L} \left( \frac{1}{ \sqrt{ (x_1-y_1)^2 + x_2^2 }  } - \frac{1}{ \sqrt{ (x_1'-y_1)^2 + x_2'^2 }  } \right) (\rd_{2}^2 \tht(y_1,0) - \rd_{2}^2 \tht(x')) \, \ud y_1 \\
				+2 \int_{2|x-x'| \leq |x_1-y_1| \leq 10L} \frac{1}{ \sqrt{ (x_1-y_1)^2 + x_2^2 }  } (\rd_{2}^2 \tht(x) - \rd_{2}^2 \tht(x')) \, \ud y_1.
			\end{gathered}
		\end{equation*} Note from \begin{equation*}
			\begin{gathered}
				\frac{1}{ \sqrt{ (x_1-y_1)^2 + x_2^2 }  } - \frac{1}{ \sqrt{ (x_1'-y_1)^2 + x_2'^2 }  } 	\leq \frac{|x_1 - x_1'| + |x_2 - x_2'|}{ \sqrt{ (x_1'-y_1)^2 + x_2'^2 } \sqrt{ (x_1-y_1)^2 + x_2^2 } }
			\end{gathered}
		\end{equation*} that \begin{equation*}
			\begin{gathered}
				\left| -2 \int_{|x_1-y_1| \le 10L } \left( \frac{1}{ \sqrt{ (x_1-y_1)^2 + x_2^2 }  } - \frac{1}{ \sqrt{ (x_1'-y_1)^2 + x_2'^2 }  } \right) (\rd_{2}^2 \tht(y_1,0) - \rd_{2}^2 \tht(x')) \, \ud y_1 \right| \leq C \| \tht \|_{C^{\beta}(\ohp)} |x-x'|^{\beta}.
			\end{gathered}
		\end{equation*} On the other hand, we can directly compute \begin{equation*}
			\begin{gathered}
				2 \int_{2|x-x'| \leq |x_1-y_1| \leq 10L} \frac{1}{ \sqrt{ (x_1-y_1)^2 + x_2^2 }  } (\rd_{2}^2 \tht(x) - \rd_{2}^2 \tht(x')) \, \ud y_1 \\
				= 4 (\rd_{2}^2 \tht(x) - \rd_{2}^2 \tht(x')) \left( \log ( 10L + \sqrt{100L^2 + x_2^2} ) - \log (2|x-x'| + \sqrt{4|x-x'|^2 + x_2^2}) \right).
			\end{gathered}
		\end{equation*} Thus, \eqref{eq:claim2} follows. To estimate $J_3$, we see from the first integral of $J_3$ that \begin{equation}\label{eq:log_sing}
			-2 \int_{|x_1-y_1| \le 10L } \frac{1}{ \sqrt{ (x_1-y_1)^2 + x_2^2 }  } \rd_{2}^2 \tht(x) \, \ud y_1 - 4 \rd_{2}^2 \tht(x) \log (x_2) = -4 \rd_{2}^2 \tht(x) \log (10L + \sqrt{100L^2 + x_2^2}).
		\end{equation} The second integral of $J_3$ is equal to \begin{equation*}
			\begin{gathered}
				2 \int_{|x_1-y_1| \le 10L } \frac{1}{ \sqrt{ (x_1'-y_1)^2 + x_2'^2 }  } \rd_{2}^2 \tht(x') \, \ud y_1 - 2 \int_{|x_1'-y_1| \le 10L } \frac{1}{ \sqrt{ (x_1'-y_1)^2 + x_2'^2 }  } \rd_{2}^2 \tht(x') \, \ud y_1 \\
				= 2 \int_{10L + x_1'}^{10L + x_1} \frac{1}{ \sqrt{ (x_1'-y_1)^2 + x_2'^2 }  } \rd_{2}^2 \tht(x') \, \ud y_1 + 2 \int_{-10L + x_1}^{-10L + x_1'} \frac{1}{ \sqrt{ (x_1'-y_1)^2 + x_2'^2 }  } \rd_{2}^2 \tht(x') \, \ud y_1, 
			\end{gathered}
		\end{equation*} where \begin{equation*}
			2 \int_{|x_1'-y_1| \le 10L } \frac{1}{ \sqrt{ (x_1'-y_1)^2 + x_2'^2 }  } \rd_{2}^2 \tht(x') \, \ud y_1 + 4 \rd_{2}^2 \tht(x') \log (x_2') = 4 \rd_{2}^2 \tht(x') \log (10L + \sqrt{100L^2 + x_2'^2})
		\end{equation*}
		holds by \eqref{eq:log_sing} and
		the right-hand side is bounded by $C\| \rd_{2}^2 \tht \|_{L^{\infty}(\ohp)}|x_1-x_1'|^{\beta}$. Combining these results with \begin{equation*}
			\left| -4 \rd_{2}^2 \tht(x) \log (10L + \sqrt{100L^2 + x_2^2}) + 4 \rd_{2}^2 \tht(x') \log (10L + \sqrt{100L^2 + x_2'^2}) \right| \leq C \| \tht \|_{C^{\beta}(\ohp)} |x-x'|^{\beta},
		\end{equation*} we deduce \eqref{eq:claim3}. This completes the proof.
	\end{proof}

	\begin{lemma}[Log-Lipschitz estimate]
		Let $\tht \in C^{2}_{0}(\ohp)$ be supported in $B(0;L)$  and $\nabla^2 \tht \in \operatorname{Lip}(\bbR^2_{+})$. Then, the corresponding velocity  satisfies \begin{equation}\label{eq:log-Lipschitz}
			\begin{split}
				|\nabla \partial_1 u(x) - \nabla \partial_1 u(x')| \leq C_L \| \nabla \partial_1 \tht \|_{Lip} |x-x'| \log \left( 10 +\frac{1}{|x-x'|} \right).
			\end{split}
		\end{equation} 
		for all $x$, $x' \in \bbR^2_{+}$ with $|x_1|$, $|x_1'| < 3L$. 
	\end{lemma}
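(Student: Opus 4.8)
The plan is to reduce each of the four entries of $\nb\rd_1 u$ to a single Riesz transform of a compactly supported \emph{Lipschitz} function on $\bbR^2$, and then invoke the classical log-Lipschitz bound for Riesz transforms. Recall $u_1 = -R_2\br\tht$ and $u_2 = R_1\br\tht$. Since the $R_i$ are translation invariant and $\rd_1\br\tht = \br{\rd_1\tht}$ by Lemma \ref{lem:interchange}, we have $\rd_1 u_1 = -R_2\br{\rd_1\tht}$ and $\rd_1 u_2 = R_1\br{\rd_1\tht}$. As $\tht$ vanishes on $\rd\bbR^2_+$, so does its tangential derivative $\rd_1\tht$; thus $\rd_1\tht\in W^{1,p}_0(\bbR^2_+)$ for all $p<\infty$ and Lemma \ref{lem:interchange} applies once more to yield
\begin{equation*}
	\rd_{11}u_1 = -R_2\,\br{\rd_{11}\tht},\quad \rd_{12}u_1 = -R_2\,\tld{\rd_{12}\tht},\quad \rd_{11}u_2 = R_1\,\br{\rd_{11}\tht},\quad \rd_{12}u_2 = R_1\,\tld{\rd_{12}\tht}.
\end{equation*}
The key structural point is that $\nb\rd_1 u$ is assembled only from $\br{\rd_{11}\tht}$ and $\tld{\rd_{12}\tht}$, and in particular never from the odd extension $\br{\rd_{22}\tht}$ responsible for the logarithmic divergence in Lemma \ref{lem:vel}.

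Next I would verify that both $\br{\rd_{11}\tht}$ and $\tld{\rd_{12}\tht}$ are Lipschitz on all of $\bbR^2$, with seminorm and $L^\infty$ norm bounded by $C_L\nrm{\nb\rd_1\tht}_{\mathrm{Lip}}$. For $\rd_{11}\tht$ this holds because it is a second tangential derivative, hence vanishes on $\rd\bbR^2_+$, and the odd extension of a Lipschitz function vanishing on the boundary is Lipschitz across it (the $\bt=1$ analogue of Lemma \ref{lem:Holder}(1)). For $\rd_{12}\tht$ no boundary vanishing is needed: the even extension of any Lipschitz function is again Lipschitz, since for $x_2>0>x_2'$ one has $|(x_1,x_2)-(x_1',-x_2')|\le|x-x'|$. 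Both functions are supported in $B(0;L)$ together with its reflection, so compact support upgrades the Lipschitz seminorm to a full $L^\infty$ bound; all third-order quantities entering these seminorms are components of $\rd_1\nb^2\tht$, whence the bound by $C_L\nrm{\nb\rd_1\tht}_{\mathrm{Lip}}$.

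Finally I would apply the standard estimate that for compactly supported $f\in\mathrm{Lip}(\bbR^2)$,
\begin{equation*}
	|R_i f(x)-R_i f(x')| \le C\big(\nrm{f}_{L^\infty}+\nrm{f}_{\mathrm{Lip}}\big)\,|x-x'|\,\log\Big(10+\tfrac{1}{|x-x'|}\Big),
\end{equation*}
and sum the four contributions above to obtain \eqref{eq:log-Lipschitz}; the restriction $|x_1|,|x_1'|<3L$ is only needed to collect everything into one constant $C_L$. The hard part is precisely this log-Lipschitz singular-integral bound: one must split the kernel into the near region $|y-x|\lesssim|x-x'|$, where the size estimate together with subtracting the harmless constant $f(x)$ keeps the contribution $O(\nrm{f}_{\mathrm{Lip}}|x-x'|)$, and the far region, where integrating $\nb$ of the kernel against the Lipschitz increment produces exactly one logarithm from the intermediate dyadic scales, all with constants depending only on $\nrm{f}_{L^\infty}$, $\nrm{f}_{\mathrm{Lip}}$, and the support radius $L$. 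Everything else — the commutation identities and the two extension bounds — is routine given Lemma \ref{lem:interchange} and the elementary reflection inequality.
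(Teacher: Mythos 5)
Your proposal is correct and follows essentially the same route as the paper: commute the two derivatives through the Biot--Savart kernel to write $\nabla\partial_1 u$ as a singular integral of $\partial_{11}\overline{\theta}=\overline{\partial_{11}\theta}$ and $\partial_{12}\overline{\theta}=\widetilde{\partial_{12}\theta}$, observe these extensions are Lipschitz on $\mathbb{R}^2$ (using that $\partial_{11}\theta$ vanishes on the boundary for the odd extension), and invoke the standard log-Lipschitz bound for singular integrals of compactly supported Lipschitz functions. The paper states this in one line ("one can prove in the standard way"); your write-up simply makes the same steps explicit.
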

	\begin{proof}
		We recall that \begin{equation*}
			\begin{split}
				\partial_1^2 u = P.V. \int_{\bbR^2} \frac{(x-y)^{\perp}}{|x-y|^3} \partial_1^2 \overline{\tht(y)} \,\ud y \qquad\mbox{and}\qquad 				\partial_{12} u = P.V. \int_{\bbR^2} \frac{(x-y)^{\perp}}{|x-y|^3} \partial_{12} \overline{\tht(y)} \,\ud y 
			\end{split}
		\end{equation*}   where $\overline{\tht}$ is the odd extension of $\tht$ in $x_2$ such that $\partial_1^2 \overline{\tht}$, $\partial_{12} \overline{\tht} \in \operatorname{Lip}(\bbR^2)$. Therefore, one can prove in the standard way that \eqref{eq:log-Lipschitz} holds. 
	\end{proof}
	
	\begin{lemma}[Sobolev regularity of the velocity]\label{lem:vel-W3p}
		Let $\tht \in W^{3,p}_0(\bbR^2_{+})$ for some $1 < p < \infty$. Then, the corresponding velocity  satisfies \begin{equation}\label{eq:vel-reg}
			\begin{aligned}
				\| \nabla u \|_{L^{\infty}(\bbR^2)} + \| \nabla^2 u \|_{L^{2p}(\bbR^2)} + \| \nabla^2 \partial_1 u \|_{L^{p}(\bbR^{2})} \leq C \| \tht \|_{W^{3,p}(\bbR^2_{+})},
			\end{aligned}
		\end{equation} and \begin{equation}\label{eq:vel-sing}
			\begin{split}
				\| \rd_{222}u_1 - 2H[\tht] \|_{L^{p}(\bbR^{2})} \leq C\| \rd_{222} \tht \|_{L^p(\bbR^2_{+})},
			\end{split}
		\end{equation} where $H[\tht]$ is defined by \begin{equation}\label{eq:def-H}
			\begin{aligned}
				H[\tht](x) := \int_{\bbR} \frac{x_2}{|x-(y_1,0)|^3} \partial_{22} \tht(y_1,0) \,\ud y_1.
			\end{aligned}
		\end{equation} Furthermore, it holds that \begin{equation}\label{eq:H2p}
			\begin{aligned}
				\| x_2 H[\tht] \|_{L^{2p}(\bbR^2)}  \leq C \| \partial_{22} \tht \|_{W^{1,p}(\bbR^2_{+})}.
			\end{aligned}
		\end{equation}
	\end{lemma}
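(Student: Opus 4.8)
The plan is to write every derivative of the velocity as a Riesz transform of the corresponding derivative of the odd extension $\overline{\tht}$, keeping careful track of how $\rd_1$ and $\rd_2$ interact with the extension via Lemma \ref{lem:interchange} and Remark \ref{rmk:W1p-extension}. Recall $u = R^{\perp}\overline{\tht}$, i.e. $u_1 = -R_2\overline{\tht}$ and $u_2 = R_1\overline{\tht}$. The crucial bookkeeping is that $\rd_1$ is tangential and commutes with both extensions ($\rd_1\overline{g}=\overline{\rd_1 g}$, $\rd_1\widetilde{g}=\widetilde{\rd_1 g}$), whereas $\rd_{22}\overline{\tht}=\overline{\rd_{22}\tht}$ is the odd extension of a function that need \emph{not} vanish on $\rd\bbR^2_+$. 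Consequently, among the third derivatives of $\overline{\tht}$ the only one failing to lie in $L^p(\bbR^2)$ is $\rd_{222}\overline{\tht}$, which is the sole source of singular behaviour; the claimed bounds in \eqref{eq:vel-reg} involve only the tangential third derivatives $\rd_1\nb^2u$, and the genuinely singular $\rd_{222}u_1$ is treated separately in \eqref{eq:vel-sing}.

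For \eqref{eq:vel-reg} I would argue as follows. For $\|\nb u\|_{L^\infty}$, split $\nb u = R^\perp(\overline{\rd_1\tht},\widetilde{\rd_2\tht})$: since $\rd_1\tht\in W^{2,p}_0$ we have $\overline{\rd_1\tht}\in W^{2,p}(\bbR^2)\hookrightarrow L^\infty$ (as $p>1$), and $R^\perp$ preserves $W^{2,p}$, while $\widetilde{\rd_2\tht}$ is a compactly supported $C^\gamma$ function whose Riesz transform is again $C^\gamma\subset L^\infty$. For $\|\nb^2 u\|_{L^{2p}}$, note $\nb^2 u = R^\perp\nb^2\overline{\tht}$; the gain to $L^{2p}$ comes from $\nb^2\tht\in W^{1,p}\hookrightarrow L^{2p}$ (by Sobolev embedding and compact support), so $\nb^2\overline{\tht}\in L^{2p}$ and Calder\'on--Zygmund gives the bound. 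Finally $\rd_1\nb^2 u = R^\perp\rd_1\nb^2\overline{\tht}$, and because $\rd_1$ is tangential every component of $\rd_1\nb^2\overline{\tht}$ is an even/odd extension of an $L^p$ third derivative of $\tht$ (in particular $\rd_1\overline{\rd_{22}\tht}=\overline{\rd_{122}\tht}$, carrying no boundary mass), so $\rd_1\nb^2\overline{\tht}\in L^p$ and Calder\'on--Zygmund again yields $\|\nb^2\rd_1u\|_{L^p}\lesssim\|\tht\|_{W^{3,p}}$.

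For \eqref{eq:vel-sing} I compute $\rd_{222}\overline{\tht}$ distributionally: since $\rd_{22}\overline{\tht}=\overline{\rd_{22}\tht}$ has a jump of size $2\rd_{22}\tht(\cdot,0)$ across $\{x_2=0\}$, one finds
\[ \rd_{222}\overline{\tht} = \widetilde{\rd_{222}\tht} + 2\,\rd_{22}\tht(\cdot,0)\,\delta_{\{x_2=0\}}. \]
Applying $-R_2$ (equivalently, differentiating the kernel representation of $u_1$ and reusing the boundary term already isolated in \eqref{eq:log_term}), the absolutely continuous part $\widetilde{\rd_{222}\tht}\in L^p$ produces a Calder\'on--Zygmund term bounded by $\|\rd_{222}\tht\|_{L^p}$, while the surface measure produces exactly $2H[\tht]$ as in \eqref{eq:def-H}; this is \eqref{eq:vel-sing}.

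The heart of the lemma, and the step I expect to be the main obstacle, is \eqref{eq:H2p}. Writing $g:=\rd_{22}\tht(\cdot,0)$ and $\rho=|x-(y_1,0)|$, a direct computation gives
\[ x_2 H[\tht](x_1,x_2) = \int_{\bbR}\Phi_{x_2}(x_1-y_1)\,g(y_1)\,\ud y_1,\qquad \Phi_t(z)=\frac{t^2}{(z^2+t^2)^{3/2}}=t^{-1}\Phi_1(z/t), \]
an $L^1$-dilation family with $\int_\bbR\Phi_t = 2$. The naive route---Young's inequality in $x_1$ followed by integration in $t=x_2$---reduces matters to $\|g\|_{L^{2p}(\bbR)}$, but the trace embedding $W^{1,p}(\bbR^2_+)\to L^{2p}(\bbR)$ fails for $1<p<3/2$, so it cannot cover the full range. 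Instead I would use $\int\Phi_t=2$ to compare $x_2H[\tht]$ with the interior value $2\rd_{22}\tht(x)$:
\[ x_2H[\tht](x)-2\rd_{22}\tht(x) = \int_\bbR\Phi_{x_2}(x_1-y_1)\big(\rd_{22}\tht(y_1,0)-\rd_{22}\tht(x_1,x_2)\big)\,\ud y_1. \]
Estimating the difference by the fundamental theorem of calculus along the segment from $(y_1,0)$ to $(x_1,x_2)$ and changing variables to the running point $z$ of the segment (with $\ud y_1\,\ud s=(x_2-z_2)^{-1}\ud z$ and $x_2^2/\rho^2=(x_2-z_2)^2/|x-z|^2$) collapses the double integral to the pointwise bound
\[ |x_2H[\tht](x)-2\rd_{22}\tht(x)| \lesssim \int_{\bbR^2}\frac{|\nb\rd_{22}\tht(z)|}{|x-z|}\,\ud z = c\,(I_1|\nb\rd_{22}\tht|)(x), \]
with $I_1$ the order-one Riesz potential. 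Hardy--Littlewood--Sobolev gives $\|I_1|\nb\rd_{22}\tht|\|_{L^{2p}}\lesssim\|\nb\rd_{22}\tht\|_{L^{2p/(p+1)}}$, and since $\nb\rd_{22}\tht$ is compactly supported and $2p/(p+1)<p$ this is $\lesssim_{L}\|\nb\rd_{22}\tht\|_{L^p}$; combined with $\|\rd_{22}\tht\|_{L^{2p}}\lesssim_{L}\|\rd_{22}\tht\|_{W^{1,p}}$ this yields \eqref{eq:H2p} for every $1<p<\infty$. The delicate points are the validity of this comparison for $W^{1,p}$ data (absolute continuity on lines up to the boundary) and that the change of variables reproduces exactly the $I_1$ kernel; once these are in place, the exponent gap in Hardy--Littlewood--Sobolev is harmless thanks to the compact support of $\tht$.
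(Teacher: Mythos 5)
Your proposal is correct, and for \eqref{eq:vel-reg} and \eqref{eq:vel-sing} it essentially reproduces the paper's argument: the identity $\rd_{222}u_1=-R_2[\widetilde{\rd_{222}\tht}]+2H[\tht]$ that you extract from the jump of $\overline{\rd_{22}\tht}$ across $\{x_2=0\}$ is exactly \eqref{eq:222u1}, which the authors obtain instead by writing $\overline{\rd_{22}\tht}=\widetilde{\rd_{22}\tht}+2\mathbf{1}_{\bbR^2_-}\overline{\rd_{22}\tht}$ and integrating by parts. The genuine divergence is in \eqref{eq:H2p}. The paper applies Minkowski's inequality, computes $\| x_2^2|x-(y_1,0)|^{-3}\|_{L^{2p}_{x_2}}=c_0|x_1-y_1|^{-(1-\frac{1}{2p})}$ exactly, and closes with the one-dimensional Hardy--Littlewood--Sobolev inequality (mapping $L^p(\bbR)$ to $L^{2p}(\bbR)$ for this kernel) followed by the trace inequality $W^{1,p}(\bbR^2_+)\to L^p(\bbR)$; since the trace is only needed in $L^p$, not $L^{2p}$, the obstruction you flag for $p<3/2$ never arises and the argument covers all $1<p<\infty$ without using compact support. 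Your alternative --- exploiting $\int_\bbR \Phi_t=2$ to subtract $2\rd_{22}\tht(x)$, collapsing the fundamental-theorem-of-calculus double integral to the Riesz potential $I_1|\nb\rd_{22}\tht|$, and applying two-dimensional Hardy--Littlewood--Sobolev --- is also valid: the change of variables does give $\ud y_1\,\ud s=(x_2-z_2)^{-1}\ud z$ and $x_2^2/\rho^2=(x_2-z_2)^2/|x-z|^2$, hence the $|x-z|^{-1}$ kernel, and the exponent $2p/(p+1)<p$ is absorbed by H\"older on the support. What your route buys is a structural explanation (the kernel is an approximate identity of mass $2$, so $x_2H[\tht]$ differs from $2\rd_{22}\tht$ by a first-order smoothing error); what it costs is that the constant in \eqref{eq:H2p} then depends on the support size $L$, whereas the paper's does not, and that you still need the trace theorem (or a density argument routed through it) to make sense of $H[\tht]$ and of the segment identity for $W^{1,p}$ data, so the trace inequality is not really avoided --- only relocated.
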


	\begin{proof}
		We fix some $1<p<\infty$ and take $\theta \in W^{3,p}_0(\bbR^2_{+})$. To begin with, from Sobolev embedding we have $\nrm{\nb u}_{L^\infty(\bbR^2)} \le C\nrm{\nb u}_{W^{1,2p}(\bbR^2)}$ and let us first show that \begin{equation}\label{eq:L2p}
			\begin{aligned}
				\| \nabla u \|_{W^{1,2p}(\bbR^2)} \leq C \| \tht \|_{W^{2,2p}(\bbR^2_{+})}. 
			\end{aligned}
		\end{equation} The claimed bound for $\nb u, \nb^2 u$ in \eqref{eq:vel-reg} then follows from applying the embedding $W^{3,p}(\bbR^2_{+}) \hookrightarrow W^{2,2p}(\bbR^2_{+})$ to $\tht$. We shall only estimate $\nrm{\nb^2 u}_{L^{2p}(\bbR^2)}$, since $\nrm{\nb u}_{L^{2p}(\bbR^2)}$ is only simpler. Using Lemma~\ref{lem:interchange} and Remark~\ref{rmk:W1p-extension}  with that $\overline{\tht}(x_1,0) = 0$ for all $x_1 \in \bbR$, we have \begin{equation*}
			\begin{aligned}
				\rd_{11}\overline{\tht} = \overline{\rd_{11}\tht}, \qquad \rd_{12}\overline{\tht} = \widetilde{\rd_{12}\tht}
			\end{aligned}
		\end{equation*} which belongs to $L^{2p}(\bbR^2)$. Hence, \begin{equation*}
			\begin{aligned}
				\| \nabla \rd_{1} u \|_{L^{2p}(\bbR^2)} = \| R^{\perp} [\nabla \rd_{1} \overline{\tht}] \|_{L^{2p}(\bbR^2)} \leq C (\| \overline{\rd_{11} \tht} \|_{L^{2p}(\bbR^2)} + \| \widetilde{\rd_{12} \tht} \|_{L^{2p}(\bbR^2)}) \leq C \| \nabla \rd_{1} \tht \|_{L^{2p}(\bbR^2_{+})}.
			\end{aligned}
		\end{equation*} To estimate $\rd_{22}u_1$, we recall from Remark~\ref{rmk:W1p-extension} that $\rd_{22}\overline{\tht} = \overline{\rd_{22}\tht} \in L^{2p}(\bbR^2),$ and then $\rd_{22} u_{1} = -R_{2} [\overline{\rd_{22}\tht}] $. Using the $L^{2p}$ boundness of $R_{2}$ in $\bbR^{2}$, we get $\| \rd_{22} u_{1} \|_{L^{2p}(\bbR^2)} \leq C \| \rd_{22} \tht \|_{L^{2p}(\bbR^2_{+})}$, which gives \eqref{eq:L2p}.

		We now estimate the third derivatives of $u$. Since $\tht \in W^{3,p}_{0}(\bbR^2_{+})$ and Remark~\ref{rmk:W1p-extension} implies \begin{equation*}
			\begin{split}
				\rd_{1}^{3} \overline{\tht} = \overline{\rd_{1}^{3}\tht} \in L^p(\bbR^2), \qquad \rd_{112} \overline{\tht} = \widetilde{\rd_{112}\tht} \in L^p(\bbR^2), \qquad \rd_{122} \overline{\tht} = \overline{\rd_{122}\tht} \in L^p(\bbR^2),
			\end{split}
		\end{equation*} we have \begin{equation*}
			\begin{split}
				\| \nabla^2 \rd_{1} u \|_{L^p(\bbR^{2})} \leq C \| \nabla^2 \rd_{1} \tht \|_{L^p(\bbR^{2}_{+})}. 
			\end{split}
		\end{equation*} This finishes the proof of \eqref{eq:vel-reg}.
		
		It only remains to estimate $\rd_{222}u_{1}$: to begin with, we decompose  $	\rd_{22} u_{1}$ as follows:  \begin{equation}\label{eq:22u1}
			\begin{aligned}
				\rd_{22} u_{1} &= -R_{2} [\overline{\rd_{22}\tht}] = - R_{2} [\widetilde{\rd_{22}\tht}] - 2 R_{2} [ \mathbf{1}_{\bbR^{2}_{-}} \overline{\rd_{22}\tht} ] \\
				&= - R_{2} [\widetilde{\rd_{22}\tht}] +2\int_{\bbR^2_{-}} \frac{x_2-y_2}{|x-y|^3} \partial_{22} \tht(\bar{y}) \,\ud y.
			\end{aligned}
		\end{equation}  Integrating by parts, we obtain\begin{equation*}
			\begin{aligned}
				2 \int_{\bbR^2_{-}} \frac{x_2-y_2}{|x-y|^3} \partial_{22} \tht(\bar{y}) \,\ud y &= 2\int_{\bbR^2_{-}} \frac{1}{|x-y|} \partial_{222} \tht(\bar{y}) \,\ud y - 2\int_{\bbR} \frac{1}{|x-(y_1,0)|} \partial_{22} \tht(y_1,0) \,\ud y_1,
			\end{aligned}
		\end{equation*} and taking the distributional derivative in $x_{2}$ to \eqref{eq:22u1} gives that  \begin{equation}\label{eq:222u1}
			\begin{aligned}
				\partial_{222}u_1 &= - R_{2} [\overline{\partial_{222}\tht}] - 2\int_{\bbR^2_{-}} \frac{x_2-y_2}{|x-y|^3} \partial_{222} \tht(\bar{y}) \,\ud y + 2H[\tht] \\
				&= - R_{2} [\overline{\partial_{222}\tht}] +2R_{2} [ \mathbf{1}_{\bbR^{2}_{-}} \overline{\rd_{222}\tht} ] + 2 H[\tht] \\
				&= - R_{2} [\widetilde{\partial_{222}\tht}] + 2 H[\tht],
			\end{aligned}
		\end{equation} where $H[\tht]$ is defined by \eqref{eq:def-H}. Hence, we have \begin{equation*}
			\| \partial_{222}u_1 - 2 H[\tht] \|_{L^p(\bbR^2)} \leq C\| R_{2} \widetilde{\partial_{222}\tht} \|_{L^p(\bbR^2)} \leq C\| \partial_{222}\tht \|_{L^p(\bbR^2_{+})}.
		\end{equation*} 
			We now estimate $\| x_2 H[\tht] \|_{L^{2p}(\bbR^2)}$. For simplicity, we introduce \begin{equation*}
				\begin{split}
					F(x_1,x_2,y_1) := \frac{x_2^{2}}{|x-(y_1,0)|^3} |\partial_{22} \tht(y_1,0)|
				\end{split}
			\end{equation*} so that \begin{equation*}
				\begin{split}
					\| x_2 H[\tht] \|_{L^{2p}(\bbR^2)} \le \left\| \nrm{F}_{L^{1}_{y_1}} \right\|_{L^{2p}_{x_2,x_1}} \le \left\| \nrm{ \nrm{F}_{L^{2p}_{x_2}} }_{L^{1}_{y_1}} \right\|_{L^{2p}_{x_1}}
				\end{split}
			\end{equation*} by Minkowski's inequality. We can explicitly compute $\nrm{F}_{L^{2p}_{x_2}}$, by a change of variables: \begin{equation*}
				\begin{split}
					\nrm{F}_{L^{2p}_{x_2}} = \frac{ c_{0} }{|x_1 - y_1|^{1 - \frac{1}{2p}}}  |\partial_{22} \tht(y_1,0)| 
				\end{split}
			\end{equation*} for some absolute constant $c_{0}$. Therefore, by the Hardy--Littlewood--Sobolev lemma and the trace inequality,  \begin{equation*}
				\begin{aligned}
					\| x_2 H[\tht] \|_{L^{2p}(\bbR^2)} & \leq C \left\| \int_{\bbR} \frac{1}{|x_1 - y_1|^{1 - \frac{1}{2p}}}  |\partial_{22} \tht(y_1,0)| \,\ud y_1 \right\|_{L^{2p}(\bbR)} \\
					&\leq C \| \partial_{22} \tht(\cdot,0) \|_{L^p(\bbR)}  \leq C\| \partial_{22} \tht \|_{W^{1,p}(\bbR^2_{+})},
				\end{aligned}
			\end{equation*} from which \eqref{eq:H2p} follows. This completes the proof.
		\end{proof}
		
		\section{High regularity wellposedness}\label{sec:lwp}

			\subsection{Preliminary remarks}\label{subsec:wellposed-rmk}
			
			We begin with some remarks on local wellposedness of \eqref{eq:SQG} on $\bbR^{2}_{+}$. For initial data $\theta_{0}$ belonging to either $C^{2,\bt}_{0}$ or $W^{3,p}_{0}$, we already have local wellposedness (existence and uniqueness) in the space $C^{1,\bt}_{0}$ or $W^{2,2p}_{0}$, respectively for any $0<\bt<1$ and $1<p<\infty$. That is, there exist $T>0$ and $L>0$ depending on $\tht_{0}$ such that we have the unique corresponding solution $\tht \in L^{\infty}([0,T]; C^{1,\bt})$ supported in the ball $B(0;L)$. Therefore, for local wellposedness in $C^{2,\bt}$, it suffices to show that this unique solution $\tht$ further belongs to $C^{2,\bt}$. A similar remark holds for the case of $W^{3,p}$ as well.

		\subsection{Propagation of $C^{2,\bt}$}\label{sec:C2-bt}
		
		In this section, we prove local wellposedness of \eqref{eq:SQG} in $C^{2,\beta}_0$ for any $0 < \beta < 1$. 
		
		\subsubsection{A priori estimate}
		
		We fix $0 < \beta < 1$ and let $\tht$ be a $L^{\infty}([0,T];C^{2,\beta}_0)$--solution to \eqref{eq:SQG} for some $T>0$ satisfying $\supp \tht(t) \subset B(0;L)$ for some $L>0$. We can apply Lemma~\ref{lem:vel} and have \eqref{eq:u-C2}, in particular, $u \in \operatorname{Lip}(\mathbb{R}^2)$. Thus, a flow-map $\Phi$ that is a solution to the ODE for any $x \in \ohp$: \begin{equation}\label{eq:flow-map}
				\begin{split}
					\frac{\ud}{\ud t} \Phi(t,x) = u(t,\Phi(t,x)), \qquad \Phi(0,x) = x
				\end{split}
			\end{equation} is well-defined on $[0,T]$.	\medskip 
		
		\noindent \textit{Estimates of $\| \tht \|_{C^{1}}$.} Differentiating both sides of \eqref{eq:SQG}, we have \begin{equation}\label{eq:tht-j}
			\begin{split}
				\partial_t \partial_j \tht + u \cdot \nabla \partial_j \tht + \partial_j u \cdot \nabla \tht = 0, \qquad j = 1,2.
			\end{split}
		\end{equation} Applying the flow-map with any given $x \in \ohp \cap B(0;L)$ shows, with $D_{t} = \rd_{t} + u \cdot \nb $, \begin{equation*}
			\begin{split}
				\frac{1}{2} D_t |\partial_j \tht|^2 + (\partial_j u \cdot \nabla \tht )\partial_j \tht = 0,
			\end{split}
		\end{equation*} which implies that \begin{equation*}
			\begin{split}
				D_t |\nabla \tht|^2 \leq 2|\nabla u| |\nabla \tht|^2 \lesssim \| \tht \|_{C^{2,\beta}}^3.
			\end{split}
		\end{equation*}  	\medskip 
		
		\noindent \textit{Estimates of $\| \tht \|_{C^{2}}$.} We estimate the second derivatives of $\tht$ separately according to direction due to lack of regularity of $\partial_2^2u_1$. We can see \begin{equation}\label{eq:tht-11}
			\begin{split}
				D_t(\rd_{1}^{2}\tht) = - 2\rd_1u_1 \rd_{1}^{2}\tht - 2\rd_{1}u_{2} \rd_{21}\tht - \rd_{1}^{2} u_1 \rd_1\tht - \rd_{1}^{2}u_2\rd_2\tht ,
			\end{split}
		\end{equation} 
		\begin{equation}\label{eq:tht-12}
			\begin{split}
				D_t(\rd_{12}\tht) = - 2\rd_2u_1 \rd_{1}^{2}\tht - 2\rd_{2}u_{2} \rd_{2}^{2}\tht - \rd_{12} u_1 \rd_1\tht - \rd_{12}u_2\rd_2\tht ,
			\end{split}
		\end{equation} and
		\begin{equation}\label{eq:tht-22}
			\begin{split}
				D_t(\rd_{2}^{2}\tht) = - 2\rd_2 u_1 \rd_{21}\tht - 2\rd_{2}u_{2} \rd_{2}^{2}\tht - \rd_{2}^{2} u_1 \rd_1\tht - \rd_{2}^{2}u_2\rd_2\tht .
			\end{split}
		\end{equation} From each equation, one can similarly show by the use of \eqref{eq:u-C2} and Remark~\ref{rmk:gm} that \begin{equation*}
			\begin{split}
				D_t |\partial_1^2 \tht|^2 \lesssim \| \tht \|_{C^{2,\beta}}^3, \qquad D_t |\partial_{12} \tht|^2 \lesssim \| \tht \|_{C^{2,\beta}}^3,
			\end{split}
		\end{equation*} and \begin{equation*}
			\begin{split}
				D_t|\partial_2^2\tht|^2 \lesssim \| \tht \|_{C^{2,\beta}}^3 + |\partial_2^2 u_1 \partial_1 \tht||\partial_2^2 \tht|,
			\end{split}
		\end{equation*} respectively. Since the boundary condition implies $\partial_1\tht(x_1,0) = 0$, it follows $|\partial_1\tht(x)| \leq x_2\| \tht \|_{C^2}$ and \begin{equation*}
			\begin{split}
				|\partial_1\tht(x)| \leq 2 \frac{x_2}{1+x_2} \| \tht \|_{C^2}, \qquad x \in \ohp.
			\end{split}
		\end{equation*} Combining with \eqref{eq:u-log}, we have $|\partial_2^2 u_1 \partial_1 \tht| \lesssim \| \tht \|_{C^{2,\beta}}$ and \begin{equation*}
			\begin{split}
				D_t |\partial_1^2 \tht|^2 \lesssim \| \tht \|_{C^{2,\beta}}^3.
			\end{split}
		\end{equation*}  	\medskip 
		
		\noindent \textit{Estimates of $\| \tht \|_{C^{2,\beta}}$.} Take $y \in \ohp \cap B(0;L)$ with $x \neq y$. Note that since we have \begin{equation}\label{eq:flow-lip}
			\begin{split}
				\exp{\left( -\int_0^t \| \nabla u(\tau) \|_{L^{\infty}} \,\ud \tau \right)} {\leq} \frac{|\Phi(t,x) - \Phi(t,y)|}{|x-y|} {\leq} \exp{\left( \int_0^t \| \nabla u(\tau) \|_{L^{\infty}} \,\ud \tau \right)},
			\end{split}
		\end{equation} $|\Phi(t,x) - \Phi(t,y)| \sim |x-y|$ holds on any closed interval. For simplicity, let $\partial_i \partial_j \tht(t,\Phi(t,y)) = \partial_i \partial_j \tht'$ and $\partial_i\partial_ju(t,\Phi(t,y)) = \partial_i\partial_ju'$. Then, it follows from \eqref{eq:tht-11} that \begin{equation*}
			\begin{gathered}
				D_t(\partial_1^2\tht - \partial_1^2\tht') = -2 (\partial_1u_1 - \partial_1u_1')\partial_1^2\tht -2 \partial_1u_1'(\partial_1^2 \tht - \partial_1^2 \tht') - 2(\rd_{1}u_{2} - \rd_{1}u_{2}')\rd_{21}\tht - 2\rd_{1}u_{2}' (\rd_{21}\tht - \rd_{21}\tht') \\
				- (\rd_{1}^{2} u_1 - \rd_1^2 u_1') \rd_1\tht - \rd_{1}^{2} u_1' (\rd_1\tht - \rd_1 \tht') - (\rd_{1}^{2}u_2 - \rd_{1}^{2}u_2') \rd_2\tht - \rd_{1}^{2}u_2' (\rd_2\tht -\rd_2\tht').
			\end{gathered}
		\end{equation*} Multiplying and dividing both sides by $\partial_1^2\tht - \partial_1^2\tht'$ and $|x-y|^{2\beta}$ respectively, we can have by \eqref{eq:u-C2} \begin{equation*}
			\begin{gathered}
				D_t\left( \frac{|\partial_1^2\tht - \partial_1^2\tht'|}{|x-y|^{\bt}} \right)^2 \lesssim \| \tht \|_{C^{2,\beta}}^3,
			\end{gathered}
		\end{equation*} and similarly, it follows from \eqref{eq:tht-12} that \begin{equation*}
			\begin{split}
				D_t\left( \frac{|\partial_{12}\tht - \partial_{12}\tht'|}{|x-y|^{\bt}} \right)^2 \lesssim \| \tht \|_{C^{2,\beta}}^3.
			\end{split}
		\end{equation*} To obtain $\| \partial_2^2 \tht \|_{C^\beta}$ estimate, we see \begin{equation*}
			\begin{gathered}
				D_t(\partial_2^2\tht - \partial_2^2\tht') = -2 (\partial_2u_1 - \partial_2u_1')\partial_{12}\tht -2 \partial_2u_1'(\partial_{12} \tht - \partial_{12} \tht') - 2(\rd_{2}u_{2} - \rd_{2}u_{2}')\rd_{2}^2\tht - 2\rd_{2}u_{2}' (\rd_{2}^2\tht - \rd_{2}^2\tht') \\
				- \rd_{2}^{2} u_1 \rd_1\tht + \rd_{2}^{2} u_1' \rd_1 \tht' - (\rd_{2}^{2}u_2 - \rd_{2}^{2}u_2') \rd_2\tht - \rd_{2}^{2}u_2' (\rd_2\tht -\rd_2\tht').
			\end{gathered}
		\end{equation*} By \eqref{eq:u-C2} and \begin{equation*}
			- \rd_{2}^{2} u_1 \rd_1\tht + \rd_{2}^{2} u_1' \rd_1 \tht' = (-x_2 \rd_2^2 u_1(x) + y_2 \rd_2^2 u_1(y)) \frac{\rd_1\tht(x)}{x_2} - y_2\rd_2^2 u_1(y) (\frac{\rd_1 \tht(x)}{x_2} - \frac{\rd_1 \tht(y)}{y_2}),
		\end{equation*} we arrive at \begin{equation*}
			\begin{gathered}
				D_t\left( \frac{|\partial_{2}^2\tht - \partial_{2}^2\tht'|}{|x-y|^{\bt}} \right)^2 \lesssim \| \tht \|_{C^{2,\beta}}^3 + \frac{|x_2 \rd_2^2 u_1(x) - y_2 \rd_2^2 u_1(y)|}{|x-y|^{\beta}} \left| \frac{\rd_1 \tht(x)}{x_2} \right| \frac{|\rd_{2}^{2} \tht(x) - \rd_2^2 \tht(y)|}{|x-y|^{\beta}} \\
				+ |y_2\rd_2^2 u_1(y)| \frac{|\frac{\rd_1 \tht(x)}{x_2} - \frac{\rd_1 \tht(y)}{y_2}|}{|x-y|^{\beta}} \frac{|\rd_{2}^{2} \tht(x) - \rd_2^2 \tht(y)|}{|x-y|^{\beta}}.
			\end{gathered}
		\end{equation*} To control the last two terms on the right-hand side, we observe \begin{equation*}
		\begin{aligned}
			\frac{\rd_1 \tht(x)}{x_2} = \int_0^1 \rd_{12} \tht(x_1,x_2 \tau) \,\ud \tau,
		\end{aligned}
	\end{equation*} and note \begin{equation*}
		\begin{aligned}
			\left| \frac{\rd_1 \tht(x)}{x_2} \right| \leq \| \tht \|_{C^2}, \qquad \frac{|\frac{\rd_1 \tht(x)}{x_2} - \frac{\rd_1 \tht(y)}{y_2}|}{|x-y|^{\beta}} \leq \int_0^1 \frac{|\rd_{12}\tht(x_1,x_2 \tau) - \rd_{12}\tht(y_1,y_2 \tau)|}{|(x_1,x_2\tau) - (y_1,y_2\tau)|^{\beta}} \,\ud \tau \leq \| \tht \|_{C^{2,\beta}}.
		\end{aligned}
	\end{equation*} With \eqref{eq:u-log1}, we have \begin{equation*}
			\begin{aligned}
				|y_2\rd_2^2 u_1(y)| \frac{|\frac{\rd_1 \tht(x)}{x_2} - \frac{\rd_1 \tht(y)}{y_2}|}{|x-y|^{\beta}} \frac{|\rd_{2}^{2} \tht(x) - \rd_2^2 \tht(y)|}{|x-y|^{\beta}} \leq C(|4\rd_2^2 \tht(y) y_2 \log y_2| + \| \tht \|_{C^{2,\beta}}) \| \tht \|_{C^{2,\beta}}^2 \leq  C\| \tht \|_{C^{2,\beta}}^3.
			\end{aligned}
		\end{equation*} For the remainder term, we assume $x_2 \leq y_2$ without loss of generality and consider \begin{equation*}
		\begin{aligned}
			\frac{|x_2 \rd_2^2 u_1(x) - y_2 \rd_2^2 u_1(y)|}{|x-y|^{\beta}} \leq \frac{|x_2 (\rd_2^2 u_1(x) - \rd_2^2 u_1(y))|}{|x-y|^{\beta}} + \frac{|(x_2-y_2) \rd_2^2 u_1(y)|}{|x-y|^{\beta}}.
		\end{aligned}
	\end{equation*} Note from \eqref{eq:u-log1} that \begin{equation*}
		\begin{aligned}
			\frac{|(x_2-y_2) \rd_2^2 u_1(y)|}{|x-y|^{\beta}} \leq 4(y_2 - x_2)^{1-\beta} |\rd_2^2 \tht(y) \log y_2| + C(y_2 - x_2)^{1-\beta} \| \tht \|_{C^{2,\beta}} \leq C \| \tht \|_{C^{2,\beta}}.
		\end{aligned}
	\end{equation*} Since \eqref{eq:u-log} implies \begin{equation*}
		\begin{gathered}
			|x_2(\rd_2^2 u_1(x) - \rd_2^2 u_1(y))| \leq |x_2(4\rd_2^2 \tht(x) \log x_2 - 4 \rd_2^2 \tht(y) \log y_2)| \\
			+ \left|4x_2(\rd_{2}^2 \tht(x) - \rd_{2}^2 \tht(y)) \log (2|x-y| + \sqrt{4|x-y|^2 + x_2^2})\right| + C \| \tht \|_{C^{2,\beta}}  |x-y|^{\beta} \\
			\leq |(4\rd_2^2 \tht(x) - 4\rd_2^2 \tht(y)) x_2\log x_2| + |4\rd_2 \tht(y) x_2(\log x_2 - \log y_2)| + C|\rd_{2}^2 \tht(x) - \rd_{2}^2 \tht(y)| + C \| \tht \|_{C^{2,\beta}}  |x-y|^{\beta},
		\end{gathered}
	\end{equation*} it follows that \begin{equation*}
		\begin{aligned}
			\frac{|x_2 (\rd_2^2 u_1(x) - \rd_2^2 u_1(y))|}{|x-y|^{\beta}} &\leq \frac{|4\rd_2 \tht(y) x_2(\log x_2 - \log y_2)|}{|x-y|^{\beta}} + C \| \tht \|_{C^{2,\beta}} \\
			&\leq C\| \tht \|_{C^2}\frac{x_2|\log x_2 - \log y_2|}{|x-y|} + C \| \tht \|_{C^{2,\beta}} \\
			&\leq C \| \tht \|_{C^{2,\beta}}.
		\end{aligned}
	\end{equation*} We used the mean value theorem with $x_2 \leq y_2$ in the last inequality. From these estimates, we have \begin{equation*}
		\begin{aligned}
			\| \tht \|_{C^2} \frac{|x_2 \rd_2^2 u_1(x) - y_2 \rd_2^2 u_1(y)|}{|x-y|^{\beta}} \frac{|\rd_{2}^{2} \tht(x) - \rd_2^2 \tht(y)|}{|x-y|^{\beta}}  \leq C\| \tht \|_{C^{2,\beta}}^3,
		\end{aligned}
	\end{equation*} and obtain \begin{equation*}
			\begin{split}
				D_t\left( \frac{|\partial_{2}^2\tht - \partial_{2}^2\tht'|}{|x-y|^{\bt}} \right)^2 \lesssim \| \tht \|_{C^{2,\beta}}^3.
			\end{split}
		\end{equation*}

		Collecting all the estimates and taking the supremum in $x, y$, we obtain \begin{equation}\label{eq:C2beta-apriori}
				\begin{split}
					\frac{d}{dt} \| \tht \|_{C^{2,\beta}}^2 \lesssim \| \tht \|_{C^{2,\beta}}^3.
				\end{split}
			\end{equation} We may take $T>0$ smaller in a way that \begin{equation}\label{eq:C2beta-apriori2}
				\begin{split}
					\sup_{ t \in [0,T] }\| \tht(t) \|_{C^{2,\beta}} \le 2\| \tht_{0} \|_{C^{2,\beta}}.
				\end{split}
			\end{equation}
			
			\subsubsection{Existence}
			
			The existence of a $L^{\infty}([0,T];C^{2,\bt})$ solution can be proved by the following iteration scheme: we set $\tht^{(0)} \equiv \tht_{0}$ on $[0,T]$ and given $n\ge 0$ and $\tht^{(n)} \in L^{\infty}([0,T];C^{2,\bt})$ supported in $B(0;L)$, we define $\tht^{(n+1)}$ as the solution of the transport equation \begin{equation}\label{eq:iteration}
				\begin{split}
					\rd_{t} \tht^{(n+1)} + u^{(n)}\cdot\nb \tht^{(n+1)} = 0 
				\end{split}
			\end{equation} on $t \in [0,T]$ with initial data $\tht_{0}$. Here, $u^{(n)}$ is the velocity corresponding to $\tht^{(n)}$, and therefore satisfies the estimates \eqref{eq:u-C2} and \eqref{eq:u-log} from Lemma \ref{lem:vel}. In particular, for $x_{2}>0$, the velocity is $C^{2}$ and the equations for the second derivatives for $\tht^{(n+1)}$ are valid in $\bbR^{2}_{+}$: \eqref{eq:tht-11}, \eqref{eq:tht-12}, \eqref{eq:tht-22} hold for $\tht$ and $u$ replaced by $\tht^{(n+1)}$ and $u^{(n)}$, respectively. Rewriting $\rd_{22}u_{1}^{(n)} \rd_{1}\tht^{(n+1)} $ by $x_{2}\rd_{22}u_{1}^{(n)} x_{2}^{-1}\rd_{1}\tht^{(n+1)} $ and proceeding as in the proof of $C^{2}$ a priori estimate, we obtain that $\tht^{(n+1)}$ is $C^{2}$ up to the boundary: $\tht^{(n+1)} \in L^\infty([0,T];C^{2}(\ohp))$. Therefore, $\nb^2 \tht^{(n+1)}$ is defined pointwise in $\ohp$ and we can follow the above proof of a priori estimate in $C^{2,\bt}$ and derive \begin{equation*}
				\begin{split}
					\frac{d}{dt} \| \tht^{(n+1)} \|_{C^{2,\beta}}^2 \lesssim \| \tht^{(n)} \|_{C^{2,\beta}} \| \tht^{(n+1)} \|_{C^{2,\beta}}^2.
				\end{split}
			\end{equation*} with the same implicit constant as in \eqref{eq:C2beta-apriori} (in particular, independent of $n$). Then, recalling \eqref{eq:C2beta-apriori2}, we obtain \begin{equation*}
				\begin{split}
					\sup_{n\ge0}	\sup_{ t \in [0,T] }\| \tht^{(n)}(t) \|_{C^{2,\beta}} \le 2\| \tht_{0} \|_{C^{2,\beta}}.
				\end{split}
			\end{equation*} Using this uniform bound, taking $T>0$ smaller if necessary, one can show that  $\left\{ \tht^{(n)} \right\}_{n\ge0}$ is a Cauchy sequence in $C([0,T];L^2(\ohp))$. Using the uniform $C^{2,\bt}$ bound again, we have that $\left\{ \tht^{(n)}, u^{(n)} \right\}_{n\ge0}$ is Cauchy in $C^{1}([0,T]\times \ohp)$. Taking the limit $n\to\infty$ in \eqref{eq:iteration}, we see that $\tht^{(n)}\to \tht$ in $C^{1}([0,T]\times \ohp)$, where $\tht$ is the unique $C^{1,\bt}$ solution corresponding to the initial data $\tht_{0}$ (remarked in Subsection \ref{subsec:wellposed-rmk}). The convergence of $\tht^{(n)}\to \tht$  can be upgraded to $L^{\infty}([0,T];C^{2,\gmm})$ with any $0<\gmm<\bt$. Then, from the pointwise convergence of second derivatives in space, we see that $\tht$ inherits the uniform $C^{2,\bt}$ bound. This gives that $\tht \in L^{\infty}([0,T];C^{2,\bt})$ with $\sup_{ t \in [0,T] }\| \tht(t) \|_{C^{2,\beta}} \le 2\| \tht_{0} \|_{C^{2,\beta}}.$

		\subsection{Propagation of $W^{3,p}_0$  }\label{subsec:W3p}
		In this section, we prove local wellposedness of \eqref{eq:SQG} in $W^{3,p}_{0}$ for any $1 < p < \infty$. In this section, we let $C>0$ depend on $p$ as well.
		
		\subsubsection{A priori estimate}\label{subsec:a}
		We fix $1 < p < \infty$ and let $\tht$ be a $W^{3,p}_{0}$-solution to \eqref{eq:SQG} for some $T>0$. Then, we can use the flow-map $\Phi(t,x)$ satisfying \eqref{eq:flow-map} for all $x \in \ohp$ and Lemma~\ref{lem:vel-W3p} on the interval $[0,T]$. For convenience, we recall the estimates from Lemma~\ref{lem:vel-W3p} here: \begin{equation}\label{eq:vel-reg2}
			\begin{aligned}
				\| \nabla u \|_{L^{\infty}(\bbR^2)} + \| \nabla^2 u \|_{L^{2p}(\bbR^2)} + \| \nabla^2 \partial_1 u \|_{L^{p}(\bbR^{2})} + 	\| \rd_{222}u_1 - 2H[\tht] \|_{L^{p}(\bbR^{2})} \leq C \| \tht \|_{W^{3,p}(\bbR^2_{+})},
			\end{aligned}
		\end{equation}  where $H[\tht]$ satisfies \begin{equation}\label{eq:H2p2}
			\begin{aligned}
				\| x_2 H[\tht] \|_{L^{2p}(\bbR^2)}  \leq C \|   \tht \|_{W^{3,p}(\bbR^2_{+})}.
			\end{aligned}
		\end{equation} We begin by recalling that \eqref{eq:SQG} conserve the $L^p$-norm  for all $1 \leq p \leq \infty$: $\| \tht(t) \|_{L^p} = \| \tht_0 \|_{L^p}.$	\medskip 
		
		\noindent \textit{Estimates of $\| \tht \|_{W^{2,p}}$.} For $1 \leq i,\,j \leq 2$, we have \begin{equation*}
			\frac{1}{p} D_t|\rd_{ij}\tht|^p + (\rd_{i} u \cdot \nabla \rd_{j} \tht + \rd_{j} u \cdot \nabla \rd_{i} \tht + \rd_{ij} u \cdot \nabla \tht) |\rd_{ij} \tht|^{p-2} \rd_{ij}\tht = 0.
		\end{equation*} Using \eqref{eq:vel-reg} with the continuous embedding $W^{1,p}(\bbR^2_{+}) \hookrightarrow L^{2p}(\bbR^2_{+})$, we can see \begin{equation*}
			\frac{d}{dt} \| \nabla^2 \tht \|_{L^p}^p \leq C \left( \| \nabla u \|_{L^{\infty}} \| \nabla^2 \tht \|_{L^p}^p + \| \nabla^2 u \|_{L^{2p}} \| \nabla \tht \|_{L^{2p}} \| \nabla^2 \tht \|_{L^p}^{p-1} \right) \le C \| \tht \|_{W^{3,p}}^{p+1}.
		\end{equation*}  	\medskip 
		
		\noindent \textit{Estimates of $\| \tht \|_{W^{3,p}}$.} Among the third order derivatives of $\tht$, we consider the most difficult term, namely $\rd_{2}^{3}\tht$: 
		\begin{equation*}
			\frac{1}{p} D_t |\rd_{2}^3 \tht|^p + (\rd_{2}^3u \cdot \nabla \tht + 3\rd_{2}^2u \cdot \nabla \rd_{2} \tht + 3\rd_{2} u \cdot \nabla \rd_{2}^2\tht)|\rd_2^3 \tht|^{p-2} \rd_{2}^3 \tht = 0.
		\end{equation*} Integrating in space, we obtain that \begin{equation*}
			\begin{split}
				\frac{d}{dt} \nrm{\rd_{2}^{3} \tht}_{L^{p}}^{p} \le C  \nrm{\rd_{2}^{3} \tht}_{L^{p}}^{p-1} \left(  \nrm{ \rd_{2}^3u \cdot \nabla \tht }_{L^p} + \nrm{\rd_{2}^2u \cdot \nabla \rd_{2} \tht}_{L^p} + \nrm{\rd_{2} u \cdot \nabla \rd_{2}^2\tht }_{L^p} \right). 
			\end{split}
		\end{equation*} The last two terms are straightforward to bound, just using \eqref{eq:vel-reg2}: \begin{equation*}
			\begin{split}
				\nrm{\rd_{2} u \cdot \nabla \rd_{2}^2\tht }_{L^p} \le \nrm{\rd_{2} u }_{L^\infty} \nrm{\nabla \rd_{2}^2\tht }_{L^p} \le C\nrm{\tht}_{W^{3,p}}^{2}, 
			\end{split}
		\end{equation*}\begin{equation*}
			\begin{split}
				\nrm{\rd_{2}^2u \cdot \nabla \rd_{2} \tht}_{L^p} \le C\nrm{\rd_{2}^2u }_{L^{2p}} \nrm{\nabla \rd_{2} \tht}_{L^{2p}} \le C\nrm{\tht}_{W^{3,p}}^{2}. 
			\end{split}
		\end{equation*} Then, to treat the remaining term, we rewrite it as \begin{equation*}
			\begin{split}
				\rd_{2}^3u \cdot \nabla \tht = 2x_{2}H[\tht] \frac{\rd_{1}\tht}{x_{2}} +  (\rd_{2}^3u_1 - 2H[\tht]) \rd_{1}\tht -  \rd_{2}^{2}\rd_{1}u_{1} \rd_{2}\tht. 
			\end{split}
		\end{equation*} Again, the last two terms can be bounded simply using \eqref{eq:vel-reg2}: \begin{equation*}
			\begin{split}
				\nrm{ (\rd_{2}^3u_1 - 2H[\tht]) \rd_{1}\tht -  \rd_{2}^{2}\rd_{1}u_{1} \rd_{2}\tht}_{L^{p}} \le  C\nrm{\tht}_{W^{3,p}}^{2}. 
			\end{split}
		\end{equation*} Finally, we can bound using \eqref{eq:H2p2} and Hardy's inequality that \begin{equation*}
			\begin{split}
				\left\Vert x_{2}H[\tht] \frac{\rd_{1}\tht}{x_{2}} \right\Vert_{L^p} \le C\left\Vert x_{2}H[\tht] \right\Vert_{L^{2p}}\left\Vert \frac{\rd_{1}\tht}{x_{2}} \right\Vert_{L^{2p}} \le C\left\Vert x_{2}H[\tht] \right\Vert_{L^{2p}}\left\Vert  {\rd_{12}\tht}  \right\Vert_{L^{2p}} \le C\nrm{\tht}_{W^{3,p}}^{2}.
			\end{split}
		\end{equation*}This shows that \begin{equation*}
			\begin{split}
				\frac{d}{dt} \nrm{\rd_{2}^{3} \tht}_{L^{p}}^{p} \le C   \nrm{ \tht}_{W^{3,p}}^{p+1}.
			\end{split}
		\end{equation*} Estimating other third order derivatives of $\tht$ (which then involves at least one factor of $\rd_{1}$) is strictly easier, and we omit the proof. Then, collecting the estimates, we can conclude  \begin{equation*}
			\begin{split}
				\frac{d}{dt} \nrm{ \tht}_{W^{3,p}}^{p} \le C   \nrm{ \tht}_{W^{3,p}}^{p+1}.
			\end{split}
		\end{equation*} 

        \subsubsection{Existence}

        Given the a priori estimates, we sketch the existence of a $W^{3,p}$ solution. While we expect that there are several ways of doing this (for instance, one may adapt the scheme introduced in \cite{CNgu2}), we present a method which uses a mollified velocity. Namely, given an initial datum $\tht_{0} \in W^{3,p}(\ohp)$ and some $\varepsilon>0$, we consider the transport equation \begin{equation}\label{eq:mollify}
        	\begin{split}
        		\rd_t \tht + u^{\varepsilon}\cdot\nb \tht = 0, \qquad \tht(t=0) = \tht_{0}. 
        	\end{split}
        \end{equation} where we define \begin{equation}\label{eq:u-eps}
        	\begin{split}
        		u^{\varepsilon}(t,x) & = \nb^\perp_{x}\left[ \int_{\bbR^2} \frac1{ (|x-y|^2 + \varep^2)^{1/2} } \overline{\tht}(t,y) dy \right] \\
        		&=  \int_{ \bbR^2_+ } \left[ \frac{(x-y)^\perp}{(|x-y|^2+\varep^2)^{3/2}} - \frac{(x-\bar{y})^\perp}{(|x-\bar{y}|^2+\varep^2)^{3/2}}   \right] \tht(t,y) \, \ud y. 
        	\end{split}
        \end{equation}
        For compactly supported $\tht \in L^{\infty}$, $u^\varep$ is a smooth divergence-free vector field for each $\varep>0$. 
        
        Using that $u^\varep$ is smooth (whenever $\tht$ is bounded and compactly supported), it can be shown that \eqref{eq:mollify} has a unique solution for all times, which belongs to $L^{\infty}([0,T];W^{3,p})$ for any $T>0$. Let us denote this unique solution by $\tht^{(\varep)}(t,x)$. (One may also mollify the initial data as well, which results in a $C^{\infty}$ solution for each $\varep>0$.) 
        
        Since $\tht^{(\varep)} \in   L^{\infty}([0,T];W^{3,p})$, we can justify the $W^{3,p}$ a priori estimate given in the previous section. Here, the point is that, by repeating the proof of Lemma \ref{lem:vel-W3p} for $u^\varep$ instead of $u$, we can get all the same inequalities from Lemma \ref{lem:vel-W3p} for $u^\varep$ with $C$ independent of $\varep$, where $H[\tht]$ is now replaced by 
        \begin{equation*}
        	\begin{split}
        		H^{\varep}[\tht^{(\varep)}](x) := \int_{\bbR} \frac{x_2}{(|x-(y_1,0)|^2 + \varep^2 )^{3/2}} \partial_{22} \tht^{(\varep)} (y_1,0) \,\ud y_1.
        	\end{split}
        \end{equation*}
        That is, the estimates \begin{equation*} 
        	\begin{split}
        		\| \rd_{222}u_1^{\varep} - 2H^{\varep}[\tht^{(\varep)} ] \|_{L^{p}(\bbR^{2})} \leq C\| \rd_{222} \tht^{(\varep)}  \|_{L^p(\bbR^2_{+})}, \qquad 	\| x_2 H^{\varep}[\tht^{(\varep)} ] \|_{L^{2p}(\bbR^2)}  \leq C \| \partial_{22} \tht^{(\varep)}  \|_{W^{1,p}(\bbR^2_{+})}
        	\end{split}
        \end{equation*}   hold with some $C>0$ independent of $\varep>0$. In particular, we can find $T>0$, depending only on $\nrm{\tht_{0}}_{W^{3,p}}$, such that the sequence of solutions $\{ \tht^{(\varep)} \}_{\varep>0}$ is bounded uniformly in the space $L^{\infty}([0,T];W^{3,p})$. This implies that the sequence of time derivatives $\{ \rd_t\tht^{(\varep)} \}_{\varep>0}$ is uniformly bounded in $L^{\infty}([0,T];W^{2,p})$. Therefore, by appealing to the Aubin--Lions lemma, we can extract a subsequence which is strongly (in $C_{t}W^{2,p}$) convergent to some $\tht$ belonging to $L^{\infty}([0,T];W^{3,p})$ and satisfying $\tht(t=0)=\tht_{0}$. Strong convergence in $W^{2,p}$ shows that $\tht$ is a solution to \eqref{eq:SQG}. 
        
        {It only remains to prove that the solution $\tht$ actually belongs to $C([0,T];W^{3,p})$, and for this we may apply the argument in Bedrossian--Vicol \cite[pp. 44-45]{BV}. First, the same argument shows existence of the solution $\tht \in L^{\infty}([-T,T];W^{3,p})$, and by time translation symmetry of \eqref{eq:SQG}, it suffices to prove continuity of the solution at $t = 0 $. Furthermore, from time reversal symmetry, we can just check the right continuity at $t = 0$. On $(-T,T)$, using that $\rd_t\tht = -u\cdot\nb\tht$, we have that $\rd_t\tht \in L^{\infty}(-T,T;C^{\gmm})$ for some $\gmm>0$. This implies that $\tht$ is weakly continuous in time with values in $W^{3,p}$; namely, for any test function $\varphi \in (W^{3,p})^*$, the duality pairing $\brk{ \tht, \varphi }$ is a continuous function of $t \in (-T,T)$. This implies in particular that the norm may only drop at $t = 0$; \begin{equation*}
        		\begin{split}
        			\liminf_{t\to 0^+} \nrm{\tht(t,\cdot)}_{W^{3,p}} \ge \nrm{\tht_{0}}_{W^{3,p}}. 
        		\end{split}
        \end{equation*} Finally, the opposite inequality \begin{equation*}
        \begin{split}
        	\limsup_{t\to 0^+} \nrm{\tht(t,\cdot)}_{W^{3,p}} \le \nrm{\tht_{0}}_{W^{3,p}}
        \end{split}
    \end{equation*}  follows from the a priori estimate for the approximating sequence $\tht^{(\varep)}$: uniformly in $\varep$, we have \begin{equation*}
    \begin{split}
    	\nrm{\tht(t,\cdot)}_{W^{3,p}} \le \liminf_{\varepsilon\to 0^+}\nrm{\tht^{(\varep)}(t,\cdot)}_{W^{3,p}} \le (1+O(t))\nrm{\tht_{0}}_{W^{3,p}}. 
    \end{split}
\end{equation*}This finishes the proof of existence. }

		\section{High regularity illposedness}\label{sec:illposed}

		\begin{proof}[Proof of Theorem \ref{thm:nonexist}]
			
			We take initial data $\tht_{0} \in C^{\infty}_{0}(\overline{\bbR^2_{+}})$ supported in {$B(0;1)$} satisfying  \begin{equation*}
				\begin{split}
					\tht_0(x) = A x_1x_2 + B x_2^2, \quad x \in {B(0;r_{0})}
				\end{split}
			\end{equation*} for some $1/2 > r_{0}>0$. {(As we shall see, the proof is easily adapted to the more general case when we have $\rd_{12}\tht_{0}(0)$ and   $\rd_{22}\tht_{0}(0)$ are both nonzero.)} There exist $T>0$ and a unique solution $\tht \in L^\infty([0,T);C^{2,\bt}_{0}(\overline{\bbR^2_{+}}))$ (for any $\bt<1$) with $\tht(t=0) = \tht_{0}$. Towards a contradiction, we assume that there is some $0<\dlt \le T$ such that $\tht \in L^\infty([0,\dlt];C^3_0(\ohp))$, and set \begin{equation}\label{eq:def-M}
				\begin{split}
					M = \sup_{ t \in [0,\dlt] } \nrm{\tht(t,\cdot)}_{W^{3,\infty}}. 
				\end{split}
			\end{equation} On the other hand, we shall simply bound by $C$ the quantities that depend only on the $C^{2,\bt}$-norm of $\tht$. 
			
			Now, we take $0 < |x_{1}| \le r_{0}/10$ and set $x = (x_1,x_2)$ and $y = (x_1,0)$ where $|x_{1}| \le |x_{2}|$. We shall consider the derivatives of $\tht$ along the trajectories of $x$ and $y$, which are denoted by $\Phi(t,x)$ and $\Phi(t,y)$, respectively.

			\medskip 
			
			\noindent \textit{Estimates of $\Phi$ and $\nb\tht$.} Before we proceed, let us first obtain a few simple estimates on $\Phi = (\Phi_1,\Phi_2)$. To begin with, we note that $$\partial_1u_2 (x_1,0) = 0 = \partial_2 u_1(x_1,0) .$$ They follow from the fact that (viewed as functions on $\bbR^{2}$) $u_2$ and $u_1$ are respectively odd and even in $x_{2}$. Then, using that $u_{2}$ vanishes on the boundary, \begin{equation*}
				\begin{split}
					\left| \frac{\ud}{\ud t} \Phi_{2}(t,x) \right| = \left| u_{2}(t,\Phi(t,x)) \right| \le \nrm{\nb u_{2}}_{L^\infty} \left| \Phi_{2}(t,x) \right|,
				\end{split}
			\end{equation*} which gives \begin{equation}\label{eq:Phi-2-ratio}
				\begin{split}
					\frac12 \le \frac{\Phi_{2}(t,x)}{x_{2}} \le \frac32 , \qquad t \in [0,\dlt]
				\end{split}
			\end{equation} by taking $\dlt > 0 $ smaller if necessary. Next, using that $\Phi_{2}(t,y) = 0$ and $\Phi_{1}(0,x) = x_{1} = \Phi_{1}(0,y)$, \begin{equation*}
				\begin{split}
					\left| \frac{\ud}{\ud t} \left(\Phi_{1}(t,x) - \Phi_{1}(t,y) \right) \right| \le C\nrm{\nb u_{1}}_{L^\infty} \left(  \Phi_{2}(t,x) + \left|\Phi_{1}(t,x) - \Phi_{1}(t,y)\right| \right), 
				\end{split}
			\end{equation*} gives after using Gronwall's inequality and again taking $\dlt > 0 $ smaller if necessary that \begin{equation}\label{eq:Phi-1-diff}
				\begin{split}
					\left|\Phi_{1}(t,x) - \Phi_{1}(t,y)\right| \le Ctx_{2} \le x_{2}, \qquad t \in [0,\dlt]. 
				\end{split}
			\end{equation} Therefore, using \eqref{eq:Phi-1-diff} and \eqref{eq:Phi-2-ratio} we get \begin{equation}\label{eq:Phi-diff}
				\begin{split}
					\left|\Phi(t,x) - \Phi(t,y)\right| \le 2x_{2} , \qquad t \in [0,\dlt]. 
				\end{split}
			\end{equation} 
			Next, using that $\rd_1u_2$ vanishes on the boundary, we write 
			\begin{equation}
				\begin{aligned}
					\frac{\ud}{\ud t} \partial_1 \theta (t,\Phi(t,x)) = (-\partial_1 u_1 \partial_1 \theta - \frac{\partial_1u_2}{x_2} \partial_2 \theta x_2)(t,\Phi(t,x)).
				\end{aligned}
			\end{equation} Using boundedness of $\nrm{\rd_{12}u_{2}}_{L^\infty}$ and $\rd_{1}\tht(t=0) = A x_{2}$, we obtain \begin{equation}\label{eq:rd-1-tht}
				\begin{split}
					\left|  \partial_1 \theta (t,\Phi(t,x)) - Ax_{2} \right| \le \frac{A}{2} x_{2} , \qquad t \in [0,\dlt] 
				\end{split}
			\end{equation} again taking $\dlt>0$ smaller. Next, we have 
			\begin{equation}
				\begin{aligned}
					\frac{\ud}{\ud t} \partial_2 \theta (t,\Phi(t,x))  =( - {\partial_2u_1} \partial_1 \theta - \partial_2 u_2 \partial_2 \theta)(t,\Phi(t,x))
				\end{aligned}
			\end{equation} and using \eqref{eq:rd-1-tht} with boundedness of $\nrm{\rd_2u_2}_{L^\infty}$ gives \begin{equation}\label{eq:rd-2-tht}
				\begin{split}
					| \partial_2 \theta (t,\Phi(t,x)) | \le C\left( | \partial_2 \theta (0,x) | + x_{2} \right) \le C(|x_{1}| + x_{2}), \qquad t \in [0,\dlt]. 
				\end{split}
			\end{equation}

			\medskip 
			
			\noindent \textit{Estimates of $\nb^2\tht$.} With these preparations, we   consider  $\rd_{2}^{2}\tht$ along the flow. From \eqref{eq:tht-22}, we have that \begin{equation*}
				\begin{split}
					\frac{\ud}{\ud t} \partial_2^2 \theta(t,\Phi(t,x)) = \left( - 2\rd_2 u_1 \rd_{21}\tht - 2\rd_{2}u_{2} \rd_{2}^{2}\tht - {\rd_{2}^{2} u_1 }\rd_1\tht - \rd_{2}^{2}u_2\rd_2\tht \right) (t,\Phi(t,x)) .
				\end{split}
			\end{equation*} All the second derivatives of $u$ are bounded, except for $\rd_2^2 u_1$. For this term, we estimate \begin{equation*}
				\begin{split}
					\left| {\rd_{2}^{2} u_1 }\rd_1\tht  \right| \le C \nrm{ x_{2}  \rd_{2}^{2} u_1 }_{L^\infty} \nrm{x_{2}^{-1}\rd_1\tht }_{L^\infty} \le C
				\end{split}
			\end{equation*} using \eqref{eq:u-log} and \eqref{eq:rd-1-tht}. Then we can obtain \begin{equation}\label{eq:rd-2-2-tht}
				\begin{split}
					\left| 	\frac{\ud}{\ud t} \partial_2^2 \theta(t,\Phi(t,x)) \right| \le C \quad \implies \quad \rd_2^2\tht(t,\Phi(t,x)) \ge \rd_2^2\tht(0,x) - Ct = 2B - Ct \ge B
				\end{split}
			\end{equation} by shrinking $\dlt>0$ again if necessary. Proceeding similarly and using $\rd_{12}\tht(0,x) = A$, we can prove that \begin{equation}\label{eq:rd-1-2-tht}
				\begin{split}
					\rd_{12}\tht(t,\Phi(t,x)) \ge \frac{A}{2}, \qquad t \in [0,\dlt]. 
				\end{split}
			\end{equation} Since the proof is only simpler, we omit the argument. 
			We are ready to complete the proof, by studying the evolution of difference of $\rd_{2}^{2}\tht$ along two particle trajectories. We write 
			\begin{equation*}
				\begin{split}
					\frac{\ud}{\ud t} \left( \partial_2^2 \theta(t,\Phi(t,x)) - \partial_2^2 \theta(t,\Phi(t,y)) \right) & = (- 2\rd_2 u_1 \rd_{21}\tht )(t,\Phi(t,x)) - (- 2\rd_2 u_1 \rd_{21}\tht )(t,\Phi(t,y)) \\
					& \qquad ( - 2\rd_{2}u_{2} \rd_{2}^{2}\tht)(t,\Phi(t,x)) - ( - 2\rd_{2}u_{2} \rd_{2}^{2}\tht)(t,\Phi(t,y)) \\
					&\qquad ( - \rd_{2}^{2} u_1 \rd_1\tht )(t,\Phi(t,x)) - ( - \rd_{2}^{2} u_1 \rd_1\tht )(t,\Phi(t,y)) \\
					&\qquad ( - \rd_{2}^{2}u_2\rd_2\tht )(t,\Phi(t,x)) - ( - \rd_{2}^{2}u_2\rd_2\tht )(t,\Phi(t,y)) \\
					& =: I + II + III + IV . 
				\end{split}
			\end{equation*}
			We now estimate the differences term by term. 
			
			\medskip 
			
			\noindent \textit{Estimate of $II$.} Simply using the mean value theorem, we have that \begin{equation*}
				\begin{split}
					\left| ( - 2\rd_{2}u_{2} \rd_{2}^{2}\tht)(t,\Phi(t,x)) - ( - 2\rd_{2}u_{2} \rd_{2}^{2}\tht)(t,\Phi(t,y)) \right| &\le C\nrm{ \nb ( \rd_2u_2 \rd_{2}^{2}\tht ) }_{L^\infty} |\Phi(t,x) - \Phi(t,y)| \\
					& \le C\left( \nrm{ \rd_1 u_1 }_{W^{1,\infty}} \nrm{\tht}_{W^{3,\infty}} \right) |\Phi(t,x) - \Phi(t,y)| \\
					& \le CM |\Phi(t,x) - \Phi(t,y)|, 
				\end{split}
			\end{equation*} using $\rd_2u_2 = -\rd_1u_1$.

			\medskip 
			
			\noindent \textit{Estimate of $IV$.} We first rewrite $IV$ as \begin{equation*}
				\begin{split}
					( - \rd_{2}^{2}u_2\rd_2\tht )(t,\Phi(t,x)) - ( - \rd_{2}^{2}u_2\rd_2\tht )(t,\Phi(t,y)) & = - \left( \partial_2^2 u_2(t,\Phi(t,x)) - \partial_2^2 u_2(t,\Phi(t,y)) \right) \partial_2 \theta(t,\Phi(t,x)) \\
					& \qquad + \rd_{2}^{2} u_{2} (t,\Phi(t,y)) \left( \rd_2\tht(t,\Phi(t,y)) - \rd_2\tht(t,\Phi(t,x)) \right) \\ 
					& =: IV_1 + IV_2. 
				\end{split}
			\end{equation*} Using $\rd_{2}^{2}u_2 = -\rd_{2} \rd_{1} u_{1}$, the term $IV_2$ is simply bounded by \begin{equation*}
				\begin{split}
					\left| IV_2 \right| \le C\nrm{ \rd_{1} u_{1}}_{W^{1,\infty}} \nrm{\tht}_{W^{2,\infty}} |\Phi(t,x) - \Phi(t,y)| \le C|\Phi(t,x) - \Phi(t,y)|. 
				\end{split}
			\end{equation*}
			To bound the other term, we use the log-Lipschitz estimate \eqref{eq:log-Lipschitz} together with \eqref{eq:rd-2-tht}:
			\begin{equation}
				\begin{aligned}
					\left| \partial_2^2u_2(t,x) - \partial_2^2u_2(t,y) \right| \leq CM |x-y| \log \left( 10 +\frac{1}{|x-y|} \right)
				\end{aligned}
			\end{equation} to obtain that \begin{equation*}
				\begin{split}
					\left|IV_1\right| &\le CM(|x_1| + x_2) |\Phi(t,x) - \Phi(t,y)|\log\left( 10 + \frac{1}{|\Phi(t,x) - \Phi(t,y)|}\right)  \\ 
					& \le CMx_{2}|\Phi(t,x) - \Phi(t,y)|\log\left( 10 + \frac{C}{x_{2}}\right) \le CM|\Phi(t,x) - \Phi(t,y)|
				\end{split}
			\end{equation*} using $|x_1| \le  x_2$  and \eqref{eq:Phi-diff}.

			\medskip 
			
			\noindent \textit{Estimate of $I$ and $III$.} We observe that $\rd_{2}^{2} u_{1} \rd_{1}\tht$ and $\rd_{2} u_{1}$ vanish on the boundary, so that $III = -\rd_{2}^{2} u_{1} \rd_{1}\tht (t,\Phi(t,x))$ and similarly, $I = - 2\rd_2 u_1 \rd_{21}\tht (t,\Phi(t,x))$. We now use \eqref{eq:u-log1} and \eqref{eq:rd-1-tht} to replace $\rd_2^2 u_1$ by its main term (with $\Phi = \Phi(t,x)$) \begin{equation*}
				\begin{split}
					III \ge C_* \rd_{22}\tht(t,\Phi) \log \frac{1}{\Phi_{2}} \rd_{1}\tht(t,\Phi) - Cx_{2} \ge \frac{C_*}{2} A B x_{2} \log \frac{1}{x_2} - C x_{2}  
				\end{split}
			\end{equation*} where we have used lower bounds from \eqref{eq:rd-2-2-tht} and \eqref{eq:rd-1-tht} as well as $\log \frac{1}{\Phi_{2}}  = \log \frac{1}{x_{2}} + \log \frac{x_2}{\Phi_{2}} $, with the latter being uniformly bounded by an absolute constant. Regarding $I$, we can first integrate \eqref{eq:u-log} in $x_{2}$ to obtain a lower bound \begin{equation*}
				\begin{split}
					-\rd_{2} u_{1}(t,\Phi) \ge C_* \Phi_2  \log \frac{1}{\Phi_{2}} - C\Phi_2. 
				\end{split}
			\end{equation*} Proceeding similarly and using \eqref{eq:rd-1-2-tht} this time, we obtain a lower bound \begin{equation*}
				\begin{split}
					I \ge {C_*} A B x_{2} \log \frac{1}{x_2} - C x_{2}.
				\end{split}
			\end{equation*}

			\medskip 
			
			\noindent \textit{Collecting the estimates.} From the above, we deduce that \begin{equation*}
				\begin{split}
					\frac{\ud}{\ud t} \left( \partial_2^2 \theta(t,\Phi(t,x)) - \partial_2^2 \theta(t,\Phi(t,y)) \right)  & \ge \frac{3C_*}{2} A B x_{2} \log \frac{1}{x_2} - C x_{2}  - C(1+M)|\Phi(t,x) - \Phi(t,y)| \\
					&  \ge \frac{3C_*}{2} A B x_{2} \log \frac{1}{x_2} - C (1 +M) x_{2} 
				\end{split}
			\end{equation*} using \eqref{eq:Phi-diff} in the last inequality. Observe that when $t = 0$, $\partial_2^2 \theta(t,\Phi(t,x)) - \partial_2^2 \theta(t,\Phi(t,y))$ vanishes. Therefore, \begin{equation*}
				\begin{split}
					\partial_2^2 \theta(t,\Phi(t,x)) - \partial_2^2 \theta(t,\Phi(t,y)) \ge (\frac{3C_*}{2} A B x_{2} \log \frac{1}{x_2} - C (1 +M) x_{2} )t, \qquad t \in [0,\dlt]. 
				\end{split}
			\end{equation*} However, from the contradiction hypothesis we have an upper bound \begin{equation*}
				\begin{split}
					\left| 	\partial_2^2 \theta(t,\Phi(t,x)) - \partial_2^2 \theta(t,\Phi(t,y))  \right| \le M|\Phi(t,x) - \Phi(t,y)| \le CM x_{2}, \qquad t \in [0,\dlt]. 
				\end{split}
			\end{equation*} Dividing by $x_{2}$, we obtain \begin{equation*}
				\begin{split}
					CM \ge (\frac{3C_*}{2} A B  \log \frac{1}{x_2} - C (1 +M) )t
				\end{split}
			\end{equation*} which gives a contradiction by fixing some $t^* \in (0,\dlt]$ and taking $x_{2} \to 0^+$. This finishes the proof. 
		\end{proof} 
		

	\subsection*{Acknowledgments} IJ has been supported by the NRF grant from the Korea government (MSIT), No. 2022R1C1C1011051, RS-2024-00406821. JK was supported by the National Research Foundation of Korea(NRF) grant funded by the Korea government(MSIT) (No. RS-2024-00360798).

		\providecommand{\bysame}{\leavevmode\hbox to3em{\hrulefill}\thinspace}
		\providecommand{\MR}{\relax\ifhmode\unskip\space\fi MR }
		\providecommand{\MRhref}[2]{%
			\href{http://www.ams.org/mathscinet-getitem?mr=#1}{#2}
		}
		\providecommand{\href}[2]{#2}


\begin{thebibliography}{10}
			
			\bibitem{Bed}
			Jonas Azzam and Jacob Bedrossian, \emph{Bounded mean oscillation and the
				uniqueness of active scalar equations}, Trans. Amer. Math. Soc. \textbf{367}
			(2015), no.~5, 3095--3118. \MR{3314802}
			
			\bibitem{BV}
			Jacob Bedrossian and Vlad Vicol, \emph{The mathematical analysis of the
				incompressible {E}uler and {N}avier-{S}tokes equations---an introduction},
			Graduate Studies in Mathematics, vol. 225, American Mathematical Society,
			Providence, RI, [2022] \copyright 2022. \MR{4475666}
			
			\bibitem{CCGM}
			Angel Castro, Diego C\'{o}rdoba, Javier G\'{o}mez-Serrano, and Alberto
			Mart\'{\i}n~Zamora, \emph{Remarks on geometric properties of {SQG} sharp
				fronts and {$\alpha$}-patches}, Discrete Contin. Dyn. Syst. \textbf{34}
			(2014), no.~12, 5045--5059. \MR{3223861}
			
			\bibitem{CCZ}
			\'Angel Castro, Diego C\'ordoba, and Fan Zheng, \emph{The lifespan of classical
				solutions for the inviscid surface quasi-geostrophic equation}, Ann. Inst. H.
			Poincar\'e{} C Anal. Non Lin\'eaire \textbf{38} (2021), no.~5, 1583--1603.
			\MR{4300933}
			
			\bibitem{CI2}
			Peter Constantin and Mihaela Ignatova, \emph{Critical {SQG} in bounded
				domains}, Ann. PDE \textbf{2} (2016), no.~2, Art. 8, 42. \MR{3595454}
			
			\bibitem{CI1}
			\bysame, \emph{Estimates near the boundary for critical {SQG}}, Ann. PDE
			\textbf{6} (2020), no.~1, Paper No. 3, 30. \MR{4093852}
			
			\bibitem{CIN}
			Peter Constantin, Mihaela Ignatova, and Quoc-Hung Nguyen, \emph{Global
				regularity for critical {SQG} in bounded domains}, Comm. Pure Appl. Math.
			\textbf{78} (2025), no.~1, 3--59. \MR{4822906}
			
			\bibitem{CNgu}
			Peter Constantin and Huy~Quang Nguyen, \emph{Global weak solutions for {SQG} in
				bounded domains}, Comm. Pure Appl. Math. \textbf{71} (2018), no.~11,
			2323--2333. \MR{3862092}
			
			\bibitem{CNgu2}
			\bysame, \emph{Local and global strong solutions for {SQG} in bounded domains},
			Phys. D \textbf{376/377} (2018), 195--203. \MR{3815216}
			
			\bibitem{Cor-hyp}
			Diego Cordoba, \emph{Nonexistence of simple hyperbolic blow-up for the
				quasi-geostrophic equation}, Ann. of Math. (2) \textbf{148} (1998), no.~3,
			1135--1152. \MR{1670077}
			
			\bibitem{CGI}
			Diego C\'{o}rdoba, Javier G\'{o}mez-Serrano, and Alexandru~D. Ionescu,
			\emph{Global solutions for the generalized {SQG} patch equation}, Arch.
			Ration. Mech. Anal. \textbf{233} (2019), no.~3, 1211--1251. \MR{3961297}
			
			\bibitem{CoZo}
			Diego C\'{o}rdoba and Luis Mart\'{\i}nez-Zoroa, \emph{Non existence and strong
				ill-posedness in {$C^k$} and {S}obolev spaces for {SQG}}, Adv. Math.
			\textbf{407} (2022), Paper No. 108570, 74. \MR{4452676}
			
			\bibitem{DEJ}
			Theodore~D. Drivas, Tarek~M. Elgindi, and In-Jee Jeong, \emph{Twisting in
				{H}amiltonian flows and perfect fluids}, Invent. Math. \textbf{238} (2024),
			no.~1, 331--370. \MR{4794597}
			
			\bibitem{EJ1}
			Tarek~M. Elgindi and In-Jee Jeong, \emph{Symmetries and critical phenomena in
				fluids}, Comm. Pure Appl. Math. \textbf{73} (2020), no.~2, 257--316.
			\MR{4054357}
			
			\bibitem{GaPa}
			Francisco Gancedo and Neel Patel, \emph{On the local existence and blow-up for
				generalized {SQG} patches}, Ann. PDE \textbf{7} (2021), no.~1, Paper No. 4,
			63. \MR{4235799}
			
			\bibitem{HH-sqg}
			Zineb Hassainia and Taoufik Hmidi, \emph{On the {V}-states for the generalized
				quasi-geostrophic equations}, Comm. Math. Phys. \textbf{337} (2015), no.~1,
			321--377. \MR{3324164}
			
			\bibitem{HeKi}
			Siming He and Alexander Kiselev, \emph{Small-scale creation for solutions of
				the {SQG} equation}, Duke Math. J. \textbf{170} (2021), no.~5, 1027--1041.
			\MR{4255049}
			
			\bibitem{HMsqg}
			Taoufik Hmidi and Joan Mateu, \emph{Existence of corotating and
				counter-rotating vortex pairs for active scalar equations}, Comm. Math. Phys.
			\textbf{350} (2017), no.~2, 699--747. \MR{3607460}
			
			\bibitem{HuShuZha}
			John~K. Hunter, Jingyang Shu, and Qingtian Zhang, \emph{Global solutions of a
				surface quasigeostrophic front equation}, Pure Appl. Anal. \textbf{3} (2021),
			no.~3, 403--472. \MR{4379142}
			
			\bibitem{Ign}
			Mihaela Ignatova, \emph{Construction of solutions of the critical {SQG}
				equation in bounded domains}, Adv. Math. \textbf{351} (2019), 1000--1023.
			\MR{3956764}
			
			\bibitem{Iwa2}
			Tsukasa Iwabuchi, \emph{Analyticity and large time behavior for the {B}urgers
				equation and the quasi-geostrophic equation, the both with the critical
				dissipation}, Ann. Inst. H. Poincar\'e{} C Anal. Non Lin\'eaire \textbf{37}
			(2020), no.~4, 855--876. \MR{4104828}
			
			\bibitem{Iwa1}
			\bysame, \emph{An application of spectral localization to the critical {SQG} on
				a ball}, J. Evol. Equ. \textbf{22} (2022), no.~4, Paper No. 80, 25.
			\MR{4484382}
			
			\bibitem{JKY}
			In-Jee Jeong, Junha Kim, and Yao Yao, \emph{On well-posedness of
				$\alpha$-{S}{Q}{G} equations in the half-plane}, Trans. Amer. Math. Soc.
			\textbf{378} (2025), 421--446.
			
			\bibitem{KJ-SQG}
			Junha Kim and In-Jee Jeong, \emph{Strong illposedness for {S}{Q}{G} in critical
				{S}obolev spaces}, Analysis \& PDE \textbf{17} (2024), no.~1, 133--170.
			
			\bibitem{KN}
			Alexander Kiselev and Fedor Nazarov, \emph{A simple energy pump for the surface
				quasi-geostrophic equation}, Nonlinear partial differential equations, Abel
			Symp., vol.~7, Springer, Heidelberg, 2012, pp.~175--179. \MR{3289364}
			
			\bibitem{KRYZ}
			Alexander Kiselev, Lenya Ryzhik, Yao Yao, and Andrej Zlato{\v{s}}, \emph{Finite
				time singularity for the modified {SQG} patch equation}, Ann. of Math. (2)
			\textbf{184} (2016), no.~3, 909--948. \MR{3549626}
			
			\bibitem{KYZ}
			Alexander Kiselev, Yao Yao, and Andrej Zlato{\v{s}}, \emph{Local regularity for
				the modified {SQG} patch equation}, Comm. Pure Appl. Math. \textbf{70}
			(2017), no.~7, 1253--1315. \MR{3666567}
			
			\bibitem{MTXX}
			Qianyun Miao, Changhui Tan, Liutang Xue, and Zhilong Xue, \emph{Local
				regularity and finite-time singularity for a class of generalized sqg patches
				on the half-plane}, 2024.
			
			\bibitem{StoVa}
			Logan~F. Stokols and Alexis~F. Vasseur, \emph{H\"older regularity up to the
				boundary for critical {SQG} on bounded domains}, Arch. Ration. Mech. Anal.
			\textbf{236} (2020), no.~3, 1543--1591. \MR{4076071}
			
			\bibitem{Z2}
			Andrej Zlato{\v{s}}, \emph{Local regularity and finite time singularity for the
				generalized {S}{Q}{G} equation on the half-plane}, arXiv:2305.02427.
			
		\end{thebibliography}
	\end{document}